\documentclass[a4paper,12pt]{article}

\usepackage{amssymb, amsmath}
 \usepackage{amsthm}

\title{Wiman-Valiron method for fractional derivatives and sharp  growth estimates of $\alpha$-analytic solutions for linear fractional differential equations}
 \author{Igor Chyzhykov}

\newcommand{\f}{\varphi}
\newcommand{\RR}{\mathbb{R}}
\newcommand{\NN}{\mathbb{N}}
\newcommand{\ZZ}{\mathbb{Z}}
\newcommand{\D}{\mathbb{D}}

\newtheorem{theorem}{Theorem}
\newtheorem{lemma}{Lemma}
\newtheorem{remark}{Remark}
\newtheorem{example}{Example}
\newtheorem*{cor*}{Corollary}

\newtheorem{lettertheorem}{Theorem}

\numberwithin{equation}{section}

\begin{document}
\maketitle
\begin{abstract}
We consider a fractional linear differential equation with successive derivatives  given by
$	\mathbb{D}_\alpha^{n}y+ p_{n-1}(x) \mathbb{D}_\alpha^{n-1}y+ \dots +p_{1}(x)\mathbb{D}_\alpha y+p_0(x)y=0$, where $\mathbb{D}_\alpha^{j}$ is the $j$th iteration of the Caputo-Djrbashian fractional derivative of order $\alpha>0$, $p_j$ are $\alpha$-analytic functions for $0<x^\alpha <R$. Generalizing a result of Kilbas, Rivero, Rodr\'iguez-Germ\'a and Trujillo, we prove the existence and  uniqueness of the corresponding Cauchy problem in the class of $\alpha$-analytic functions.  We establish an exact growth order for the solution  when $p_j(x)=P_j(x^\alpha)$, where $P_j$ are polynomials, and  $p_0$ dominates in some sense. This is the full counterpart of the classical case of ordinary differential equations.  In particular, we demonstrate the sharpness of Kochubei's result and generalize it. To achieve this, we extend the Wiman-Valiron theory to analytic functions and the Djrbashian-Gelfond-Leontiev generalized fractional derivatives.

{Keywords:}
fractional calculus (primary);  fractional linear differential equations; Wiman-Valiron method; $\alpha$-analytic solution; Cauchy problem; Caputo-Djrbashian derivative; Gelfond-Leontiev derivative; growth of solutions;
Mittag-Leffler  functions; Bell polynomials.

MathSubjClass:  34A08 (primary),  26A33, 30B10, 33E12, 34A12, 34A25, 34M03.
\end{abstract}

\section{Introduction}
\setcounter{section}{1} \setcounter{equation}{0} 

\subsection{\bf Fractional linear differential equations}
Though fractional differential equations have a variety of applications in modelling physical processes, their theory is not as well developed as that of ordinary differential equations.

Let $\alpha>0$. This paper studies sequential fractional linear differential equations of the form (see  \cite[Chap. V.1]{MiRo})
\begin{equation}\label{e:frac_lin_eq}
	\mathbb{D}_\alpha^{n}y+ p_{n-1}(x) \mathbb{D}_\alpha^{n-1}y+ \dots +p_{1}(x)\mathbb{D}_\alpha y+p_0(x)y=0,
\end{equation}
where $\mathbb{D}_\alpha$ is the  fractional Caputo-Djrbashian derivative, $\mathbb{D}_\alpha^{j}$ its $j$th iteration,  $p_j$, $j\in\{0, \dots, n-1\}$ are $\alpha$-analytic functions, i.\ e.\  $p_j(x)=P_j(x^\alpha)$, $P_j(z)$ being analytic on $\{z\in \mathbb{C}: |z|<R\}$, $0<R\le \infty$.  
This type of equation was introduced and studied in the case of constant coefficients and the Riemann-Liouville operator  $D^\alpha$ instead of the Caputo-Djrbashian operator  in \cite[Chap.\ V]{MiRo}. It is worth remarking that neither the semigroup property $D^\alpha D^\beta = D^{\alpha + \beta}$ nor the commutativity property $D^\alpha D^\beta = D^\beta D^\alpha$ holds in general for either the Riemann-Liouville or the Caputo-Djrbashian operators (see \cite[Chap. IV.6]{MiRo}, \cite{Chikriy_mat}). In particular,  $\mathbb{D}_\alpha^{j}\ne \mathbb{D}_{\alpha j}$. Suffiecient conditions for the semigroup property and commutativity can be found in \cite[Chap. IV.6]{MiRo}, \cite{BegCap}.

Note that the existence and uniqueness of a solution to the  Cauchy problem for a more  general than \eqref{e:frac_lin_eq} linear  differential equation of fractional order  was established in  \cite{DjNe}.

As it is stated in \cite{KRRT} for the cases $n=1$ and $n=2$ equation  \eqref{e:frac_lin_eq} possesses the unique $\alpha$-analytic   soultion on $0<|x|^\alpha<R$ that satisfy initial conditions for both the Riemann-Liouville  and the Caputo-Djrbashian derivatives. Although the formal series representations of solutions are given in \cite{KRRT}, their convergence is not rigorously proved. 
On the other hand, A. Kochubei  (\cite{Koch09}) established an asymptotically sharp estimate of the growth of solutions to equation \eqref{e:frac_lin_eq} in the case $n=1$ where $A$ is a polynomial. In other words, {\sl the Cauchy problem
	$$ \mathbb{D}_\alpha y +a(x)y=0, \quad y(0)=y_0,$$
	where $a(x)=A(x^\alpha)$, and $A$ is a polynomial of degree $m$, has a unique solution of the  form $y(x)=v(x^\alpha)$, where $v$ is an entire function of order not greater than $\frac{m+1}{\alpha}$.}

In the classical case, when $\alpha = 1$, there are various sharp estimates for the growth of solutions in both model cases when the coefficients are entire functions or analytic in the unit disc $\{z\in \mathbb{C}: |z|<1\}$ (see, for example,  \cite{GG88}, \cite{GSW}, \cite{Laine}, \cite{CGH}, \cite{CHR_anal}, and  \cite{CHR_aus}). Two principal tools are used to obtain lower estimates for the growth of solutions: the Wiman-Valiron method, which was originally developed in  \cite{Wi14,Wi16,Val14,Val18} (see also the survey \cite{Hay74}) and the logarithmic derivative estimate (\cite{GG88}, \cite{CGH}, \cite{CHR_aus}).  

On one hand, there is still no understanding of how one can generalize the logarithmic derivative estimate approach for fractional derivatives (cf.\ \cite{ChSem1}). On the other hand, in \cite{ChSem2} the authors succeeded in generalizing the Wiman-Valiron method for  Riemann-Liouville derivatives. This allowed us to obtain sharp asymptotic growth of solutions to a special fractional linear differential equation, but not for \eqref{e:frac_lin_eq}. The reason is that, to find an asymptotic for an $\alpha$-analytic solution  of \eqref{e:frac_lin_eq},  one needs to generalize the Wiman-Valiron  theory for Gelfond-Leontiev type derivatives (see \cite{Kirjak94}, \cite{Kirjak99}). We do this in Section \ref{s:proof}, where we prove Theorem \ref{th:frwv},  the main result of the paper.  In the final section we study equation \eqref{e:frac_lin_eq}.   Theorem \ref{t:Cauchy} establishes the existence and uniqueness of a solution to the Cauchy problem for  \eqref{e:frac_lin_eq} in the class of $\alpha$-analytic functions. This is a slight generalization of a result from \cite{KRRT}.  We then consider the case when all coefficients of \eqref{e:frac_lin_eq} are polynomials of $t^\alpha$. We  prove (Theorem \ref{t:sol_finite_order}) that in this case 
all $\alpha$-analytic solutions are of the form $v(t^\alpha)$ where $v$ is an entire function of finite order of the growth.
Finally, Theorem \ref{t:sharp_order} gives sharp values for the order of the growth of $v$ under natural conditions on the coefficients. 
This improves the mentioned Kochubei's result as a special case (see the corollary in Section \ref{s:final}). 
Auxiliary results are given in Section \ref{s:prelim}.

We use the notation $a\lesssim b$ if there exists a constant $C>0$ such that $a\le Cb$. Similarly, $a\gtrsim b$ is understood in an analogous manner. If $a\lesssim b$ and $a\gtrsim b$, then we write $a\asymp b$ and say that $a$ and $b$ are comparable. Additionally, $a(t)\sim b(t)$ means that the quotient $a(t)/b(t)$ approaches one as $t$ tends its limit. By $[x]$ we denote the entire part of $x\in \mathbb{R}$. 

\subsection{\bf Fractional integrals and derivatives}
Let $0<T\le \infty$ and  $L(0,T)$ be the class of all integrable functions on $(0,T)$. The Riemann-Liouville fractional derivative of order $\alpha>0$ for $\f\in L(0,T)$ is defined as
$$D^{\alpha}\f(x)=\frac{d^{n}}{dx^{n}}\{I^{n-\alpha}\f(x)\}, \quad \alpha\in(n-1,n],\quad n\in\mathbb{N},$$
where
$$I^{\alpha}\f(x)=\frac{1}{\Gamma(\alpha)}\int\limits_{0}^{x}\frac{\f(t)\,dt}{(x-t)^{1-\alpha}}$$
is the Riemann-Liouville fractional integral of order $\alpha>0$ for $\f$, $\Gamma(\alpha)$ is the Gamma function. In particular, if $0<\alpha<1$, then
$$D^{\alpha}\f(x)=\frac{1}{\Gamma(1-\alpha)}\frac{d}{dx}
\int\limits_{0}^{x}\frac{\f(t)\, dt}{(x-t)^{\alpha}},$$
provided that $I^{1-\alpha} \f $ is absolutely continuous on $(0,T)$.

The fractional derivative and integral  have the following property (\cite{SaKiMa})
\begin{gather}\label{e:fracderpol}
	I^\alpha x^{\beta-1}= \frac{\Gamma (\beta)}{\Gamma(\beta+\alpha)}x^{\beta+\alpha-1}, \;	D^{\alpha}x^{\beta-1}=\frac{\Gamma(\beta)}{\Gamma(\beta-\alpha)}x^{\beta-\alpha-1},\quad\alpha,\beta>0, \alpha\ne \beta, \\
	D^\alpha 1= \frac{1}{\Gamma(1-\alpha)}x^{-\alpha}, \; D^\alpha x^{\alpha-j}=0, \quad \alpha>0, j\in \{1,2,\dots, [\alpha]+1\}.
\end{gather}
We can see that, on  one hand, Riemann-Liouville fractional differentiation can produce a singularity and, on the other hand, it can be defined on functions with a singularity at the origin. Despite this the \emph{Caputo-Djrbashian, or regularized fractional derivative} 
\begin{gather}\nonumber
	(\D^\alpha \f) (x)=D^\alpha\left( \f(x)-\sum_{k=0}^{n-1} \frac{\f ^{(k)}(0)}{k!}x^{k}\right)\\
	= D^\alpha  \f(x)-\sum_{k=0}^{n-1} \frac{\f^{(k)} (0)}{\Gamma (k+1-\alpha)}x^{k-\alpha}, \quad n-1<\alpha\le n,\label{e:cap_djr_der}
\end{gather}
is defined on functions that are continuous with their derivatives up to order $n-1$  and vanishes  on constants,   which is more natural for physical applications.

Though the operator $	I^\alpha $ is associative and commutative  with respect to the index, i.e. $I^\alpha \circ I^\beta= I^\beta \circ I^\alpha=I^{\alpha+\beta}$, $\alpha, \beta>0$, this is not the case for $D^\alpha$ and $\mathbb{D}_\alpha$ (see \cite[Chap.IV]{MiRo}, \cite{BegCap}).

\begin{example} \label{ex:1}
	Let $\alpha=\frac 12$, $u(t)=u_0+u_1t^{\frac 12}+ u_2t$, $t>0$. Then \begin{gather*}
		\mathbb{D}_{\frac 12} u(t)= u_1 \frac{\Gamma(3/2)}{\Gamma(1)} +u_2\frac{\Gamma(2)}{\Gamma(3/2} t^{\frac 12}, \\
		\mathbb{D}_{\frac 12} ^2 u(t)= u_2, \\
		\mathbb{D}_{1} u(t)=u'(t)= \frac 12 u_1t^{-\frac 12} +u_2.
	\end{gather*}
\end{example} 

Let $H_\alpha(R)$, $0<R\le \infty$, $\alpha>0$, denote the class of $\alpha$-analytic functions on $(0, R^{1/\alpha})$, that is the functions $u$ represented in the form  $u(t)=\sum_{m=0}^\infty u_m t^{\alpha m}$, $\alpha>0$, $ 0\le t^\alpha<R\le \infty$. Direct computation yields
\begin{gather}\label{e:D_alpha}
	(D^\alpha u)(t)=\sum_{m=0}^\infty u_m \frac{\Gamma(m\alpha +1)}{\Gamma((m-1)\alpha +1)}t^{\alpha(m-1)},\quad 0< t^\alpha< R, \\
	\nonumber
	(\D_\alpha u)(t)=\sum_{m=0}^\infty u_m \frac{\Gamma(m\alpha +1)}{\Gamma((m-1)\alpha +1)}t^{\alpha(m-1)}-\frac{u_0}{\Gamma(1-\alpha)}t^{-\alpha}\\ =\sum_{m=1}^\infty u_m \frac{\Gamma(m\alpha +1)}{\Gamma((m-1)\alpha +1)}t^{\alpha(m-1)}.
	\label{e:Cap_alpha}
\end{gather}

\begin{remark}
	If $u(0)=0$, and $u$ is $\alpha$-analytic for $ 0<t^\alpha<R$, then $D^\alpha u(t)=\D_\alpha u(t)$, and  $D^\alpha u(t)$ is $\alpha$-analytic for $ 0<t^\alpha<R$ as well.
\end{remark}

\begin{remark} \label{r:2} Repeating the argument from the previous remark, we have that
	\begin{equation}
		(\D_\alpha^j u)(t)=\sum_{m=j}^\infty u_m \frac{\Gamma(m\alpha +1)}{\Gamma((m-j)\alpha +1)}t^{\alpha(m-j)}=\sum_{m=0}^\infty u_{m+j} \frac{\Gamma((m+j)\alpha +1)}{\Gamma(m\alpha +1)}t^{\alpha m}.
		\label{e:Cap_j}
	\end{equation}	
	In particular, if $u_0=\dots= u_{j-1}=0$, then $(D^\alpha)^j u(t)=\D_\alpha^j u(t)$.
\end{remark}
%

\begin{lemma} \label{l:commute_d_al}
	If $u\in H_\alpha(R)$,  then $D^\alpha$ is associative and commutative providied   
	the conditions 	 $u_m=0$ for $m<\frac{\gamma+\beta}{2}$, $\gamma, \beta>0$,  i.e.
	$$ D^\beta \circ D^\gamma =D^\gamma \circ D^\beta =D^{\beta+\gamma}.$$
	In particular, $\left(D^\alpha\right)^j =D^{j\alpha}$ if $u_m=0$ for $m<j$. 
\end{lemma}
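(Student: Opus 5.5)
The plan is to reduce everything to monomials $t^{\alpha m}$ and use the power rule \eqref{e:fracderpol}, so that the identity $D^\beta\circ D^\gamma=D^{\beta+\gamma}$ becomes a telescoping identity for Gamma quotients. For a single term, \eqref{e:fracderpol} with $\beta$ replaced by $\alpha m+1$ gives
$$D^\gamma t^{\alpha m}=\frac{\Gamma(\alpha m+1)}{\Gamma(\alpha m+1-\gamma)}\,t^{\alpha m-\gamma},$$
where the coefficient is read as $0$ if $\alpha m+1-\gamma$ is a nonpositive integer. If the exponent $\alpha m-\gamma$ exceeds $-1$, the function $t^{\alpha m-\gamma}$ is locally integrable. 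We may then apply \eqref{e:fracderpol} once more, now with the power $\alpha m-\gamma+1>0$:
$$D^\beta D^\gamma t^{\alpha m}=\frac{\Gamma(\alpha m+1)}{\Gamma(\alpha m+1-\gamma)}\cdot\frac{\Gamma(\alpha m+1-\gamma)}{\Gamma(\alpha m+1-\gamma-\beta)}\,t^{\alpha m-\gamma-\beta}=D^{\beta+\gamma}t^{\alpha m}.$$
The middle factor cancels, and the result is symmetric in $\beta,\gamma$. This gives associativity and commutativity at the level of monomials.

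The role of the vanishing hypothesis on the low coefficients is to exclude the bad monomials. These are of two kinds. In the first, $\alpha m-\gamma\le -1$, so the intermediate function is not integrable near $0$ and $D^\beta$ of it is not defined by \eqref{e:fracderpol}. In the second, $D^\gamma$ annihilates $t^{\alpha m}$ (a pole of $\Gamma(\alpha m+1-\gamma)$), while $D^{\beta+\gamma}t^{\alpha m}\neq0$; these are exactly the terms in the second line of \eqref{e:fracderpol}. I would first check carefully that the stated threshold on $m$ forces $\alpha m+1>\max(\beta,\gamma)$ for all surviving indices, so that both intermediate Gamma arguments are positive and no cancellation of a zero factor against a pole occurs. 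I expect this bookkeeping of the hypothesis to be the main obstacle, and the place where the condition may need to be read in the natural normalised form $\alpha m\ge\max(\beta,\gamma)$. Once $\alpha m+1-\gamma>0$ and $\alpha m+1-\beta>0$, the computation above is legitimate for both orders of composition.

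Next I pass from monomials to $u(t)=\sum u_m t^{\alpha m}\in H_\alpha(R)$. Fix $0<T<R^{1/\alpha}$. On $[0,T]$ the series converges uniformly, as do the series with coefficients multiplied by the polynomially growing Gamma ratios $\Gamma(\alpha m+1)/\Gamma(\alpha m+1-\gamma)\asymp m^{\gamma}$, since the radius of convergence in $t^\alpha$ is unchanged. Hence $I^{n-\gamma}$ can be applied termwise: dominated convergence applies because the kernel $(x-t)^{n-\gamma-1}$ is integrable. The resulting series is a power-type series whose termwise $n$-th derivatives converge locally uniformly on $(0,T)$, so the outer $d^n/dx^n$ can also be taken termwise. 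This shows that $D^\gamma u$ is given by the termwise formula, as in \eqref{e:D_alpha}. The same argument applied to $D^\gamma u$, which is again a convergent series of powers $t^{\alpha m-\gamma}$ with exponents $>-1$, justifies the termwise computation of $D^\beta D^\gamma u$. Summing the monomial identity then yields $D^\beta D^\gamma u=D^\gamma D^\beta u=D^{\beta+\gamma}u$.

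Finally, for the particular case I argue by induction on $j$, with $\beta=\alpha$ and $\gamma=(j-1)\alpha$. The hypothesis $u_m=0$ for $m<j$ means that every surviving index satisfies $\alpha m\ge j\alpha>\max(\alpha,(j-1)\alpha)-1$. Consequently every intermediate exponent $\alpha(m-k)$, $k\le j$, is nonnegative and no Gamma pole occurs. Thus $(D^\alpha)^j u=D^\alpha D^{(j-1)\alpha}u=D^{j\alpha}u$, which is consistent with Remark~\ref{r:2}.
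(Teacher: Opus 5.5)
Your proposal is correct and follows essentially the paper's route: apply $D^\gamma$ and then $D^\beta$ termwise to the uniformly convergent series and let the Gamma quotient $\frac{\Gamma(\alpha m+1)}{\Gamma(\alpha m+1-\gamma)}\cdot\frac{\Gamma(\alpha m+1-\gamma)}{\Gamma(\alpha m+1-\gamma-\beta)}$ telescope; your justification of the termwise steps is more careful than the paper's. On the hypothesis you hedged about: the paper's proof uses $u_m=0$ for $m<(\gamma+\beta)/\alpha$ (the ``$2$'' in the statement is a misprint, as the special case $m<j$ confirms), which gives $\alpha m\ge\beta+\gamma>\max(\beta,\gamma)$ and hence your weaker sufficient condition $\alpha m+1>\max(\beta,\gamma)$, whereas under the literal threshold $(\gamma+\beta)/2$ the claim fails, e.g.\ $\alpha=\tfrac14$, $\beta=\gamma=\tfrac32$, $u(t)=t^{1/2}$ gives $D^{3/2}D^{3/2}u=0\neq D^3u$.
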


\begin{proof}[Proof of Lemma \ref{l:commute_d_al}]
	We write $u(t)=\sum_{m=0}^\infty u_m t^{\alpha m}$, $\alpha>0$, $ 0<t^\alpha<R\le \infty$. Since the power series is uniformly convergent on every segment $[0, r^\alpha] \subset [0, R )$ (cf. \cite[Theorem 3, Sec. IV.6]{MiRo}), we can integrate and differentiate it under the sum sign  at every point of $[0, R^{1/\alpha} )$. Then using the fact that $u_m=0$ for $m < (\gamma+\beta)/\alpha $ we obtain
	\begin{gather*} D^\beta D^\gamma \sum_{m=0}^\infty u_m t^{\alpha m}=D^\beta \sum_{m=\left[\frac{\gamma+\beta}{\alpha}\right]}^\infty  D^\gamma\left(  u_m t^{\alpha m}\right) \\= D^\beta \sum_{m=\left[\frac{\gamma+\beta}{\alpha}\right]}^\infty  u_m  \frac{\Gamma(\alpha m+1)}{\Gamma(\alpha m+1-\gamma)}t^{\alpha m-\gamma}.
	\end{gather*}
	Since $ \frac{\Gamma(\alpha m+1)}{\Gamma(\alpha m+1-\gamma)}\sim (\alpha m)^\gamma$, $m\to \infty$, the power series under the operator $D^\beta$ has the same radius of convergence. Then 
	\begin{gather} D^\beta D^\gamma \sum_{m=0}^\infty u_m t^{\alpha m} \nonumber
		\\= \sum_{m=\left[\frac{\gamma+\beta}{\alpha}\right]}^\infty  u_m  \frac{\Gamma(\alpha m+1)}{\Gamma(\alpha m+1-\gamma)} \frac{\Gamma(\alpha m-\gamma+1)}{\Gamma(\alpha m+1-\gamma-\beta)}t^{\alpha m-\gamma-\beta}=D^{\gamma+\beta} u(t).
	\end{gather}
	The equality $D^\gamma \circ D^\beta =D^{\beta+\gamma}$ follows by exchanging  the roles of $ \beta $ and $\gamma$. 
\end{proof}

Let
\begin{equation}\label{e:entfun}f(z)=\sum\limits_{n=0}^{\infty}a_{n}z^{n},\quad z=re^{i\theta}
\end{equation}
be an entire function. 
Let $u(t)=f(t^\alpha)=\sum_{n=0}^\infty a_n t^{\alpha n}$, $ t>0$. Direct computation shows that
\begin{equation}\label{e:GL_d_alpha}
	(\D^\alpha u)(t)=\sum_{n=1}^\infty a_n \frac{\Gamma(n\alpha +1)}{\Gamma(n\alpha +1-\alpha)}t^{\alpha(n-1)}= (\mathcal{D}^\alpha f)(t^\alpha),\end{equation}
where $$ 
(\mathcal{D}^\alpha f)(z)=\sum_{n=1}^\infty a_n \frac{\Gamma(n\alpha +1)}{\Gamma(n\alpha +1-\alpha)}z^{n-1},$$
is the  so-called Djrbashian-Gelfond-Leontiev operator (\cite{Kirjak94}, \cite{Kirjak99}, \cite{SaKiMa}), a special case of the Gelfond-Leontiev generalized differential operator corresponding to the Mittag-Leffler function $E_\alpha(z)=\sum_{k=0}^\infty \frac{z^k}{\Gamma(k\alpha+1)}$, $\alpha>0$.

The corresponding integral operator, right inverse to $\mathcal{D}_\alpha$,  can be written as
$$ 
(\mathcal{I}_\alpha f)(z)=\sum_{n=0}^\infty a_n \frac{\Gamma(n\alpha +1)}{\Gamma(n\alpha +1-\alpha)}z^{n+1},$$
with $ \mathcal{I}_\alpha \mathcal{D}_\alpha f (z)=f(z)-f(0)$.
We also have \cite[Sec. 22.3]{SaKiMa} the integral representation
$$ \mathcal{I}_\alpha f(z)=\frac{z}{\Gamma(\alpha)} \int_0^1 (1-t)^{\alpha-1} f(zt^\alpha)\, dt.$$
It follows from \eqref{e:GL_d_alpha} that  (\cite[Section 2.5]{KST}, \cite[Sec. 18.2, 22.3]{SaKiMa}) 
$$ 
\mathcal{D}_\alpha =Q\circ \D_\alpha \circ Q^{-1},$$
where $Q$ is the substitution operator $z \mapsto z^{\frac 1\alpha }$, $Q^{-1}$ is its inverse, for a corresponding branch of a multivalued power function chosen on the segment $[0, z]$.  This note allows us to apply the approach used in \cite{ChSem2} to derive Wiman-Valiron type results for the operator $\mathcal{D}_\alpha$.
\subsection{Wiman-Valiron theory}
For  $r\in [0,+\infty)$ and an entire function $f$ of the form \eqref{e:entfun} we  denote $M(r,f)=\max\{|f(z)|:|z|=r\}$. We define the maximal term as $\mu(r,f)=\max\{|a_{n}|r^{n}:n\geq0\}$ and the central index of the series as $\nu(r,f)=\max\{n\geq0:|a_{n}|r^{n}=\mu(r,f)\}$. Note that $\nu(r,f) $ is non-decreasing, and an entire function $f$ is transcendental if and only if $ \nu(r,f)\to +\infty$ as $r\to +\infty$.

For a non-constant entire function $f$, the order $\sigma(f)$ is defined as follows:
\begin{gather}\label{e:order}
	\sigma(f):=\varlimsup_{r\to \infty}\frac{\log \log M(r,f)}{\log r}=\varlimsup_{r\to \infty}\frac{\log \nu(r,f)}{\log r}.\end{gather}


Let $V$ be the class of positive continuous nondecreasing functions $v$ on $[0,+\infty)$ such that $\frac{x^{2}}{v(x)\ln v(x)}$ increases to $+\infty$ on $x\in[x_{0};+\infty)$, $x_{0}>0$, and $\int\limits_{0}^{+\infty}\frac{dx}{v(x)}<+\infty$. For example, the functions $v_0(x)=x^\beta$ $(x\ge  0)$, $1<\beta<2$, and   $v_1(x)=x\ln^{\alpha+1}x$, $(x\geq e)$,  $\alpha\in(0,1)$, belong to $V$.

A measurable set $E\subset [0, \infty) $ is called of finite logarithmic measure if $\int_{E\cap [1,\infty)}  \frac {dt}t <\infty$.

The main result of the Wiman-Valiron theory can be formulated as follows (cf. {\cite[Lemma 3.8]{Sher}})
\begin{lettertheorem}\label{th:mrwv}
	Let $v\in V$ and $\varkappa(t)=4\sqrt{v(t)\ln v(t)}$. Suppose that $f$ is an entire function, $|z_{0}|=r$, and 
	$$|f(z_{0})|\geq   M(r,f)  v^{-2}(\nu(r,f)),$$
	holds. There exists a set $E\subset \RR_+$ of finite logarithmic measure such that if
	$$r\left(1-\frac{1}{40\varkappa(\nu)}\right)<\rho<r\left(1+\frac{1}{40\varkappa(\nu)}
	\right),\quad r\not \in  E, \nu=\nu(r,f),$$
	and $q\in\mathbb{Z}_{+}$, then  we have for $|z|=\rho$
	$$\left(\frac{r}{\nu}\right)^{q}f^{(q)}(z)=
	f(z)+O\left(\frac{\varkappa(\nu)}{\nu}\right)M(\rho,f).$$
	In particular, if $\ln\rho-\ln r=o\left(\frac{1}{\varkappa(\nu)}\right)$ then
	$$M(\rho, f^{(q)})=\left(\frac{\nu}{\rho}\right)^{q}
	\left\{1+O\left(\frac{\varkappa(\nu)}{\nu}\right)\right\}M(\rho,f)=(1+o(1))\left(\frac{\nu}{r}\right)^{q}
	M(r,f)$$
	as $r\rightarrow+\infty$, $r\not \in E$.
\end{lettertheorem}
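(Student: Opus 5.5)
This is the classical Wiman–Valiron theorem, stated as Theorem A with a reference to Sheremeta. My plan follows the standard Macintyre–Sheremeta route rather than Valiron's original argument.

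\emph{Step 1 (exceptional set and concentration of the series).} Let $\nu(r)=\nu(r,f)$. Using $v\in V$, in particular $\int^\infty dx/v(x)<\infty$, I define $E$ as the set of $r$ at which the Newton majorant of $\ln|a_n|$ fails the regularity inequality
$$|a_{\nu+k}|r^{\nu+k}\le \mu(r,f)\exp\bigl(-k^2/v(\nu)\bigr),\qquad k\in\ZZ,$$
with the usual care near $k=0$. The standard argument runs through the jumps of the central index and the convexity of $\ln\mu(e^s,f)$ in $s$. It shows that the logarithmic measure of $E$ is bounded by a multiple of $\int dx/v(x)$, which is finite. For $r\notin E$, summing the Gaussian tails then gives:
\begin{itemize}
\item $M(r,f)\lesssim \mu(r,f)\sqrt{v(\nu)\ln v(\nu)}$;
\item the terms with $|n-\nu|>\varkappa(\nu)$ contribute at most $O\bigl(\mu(r,f)\,v(\nu)^{-3}\bigr)$.
\end{itemize}
The same estimates persist for $|z|=\rho$ in the stated annulus. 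There $(\rho/r)^{n-\nu}=\exp(O(|n-\nu|/\varkappa(\nu)))$, and this factor is absorbed by the Gaussian decay. The constant $1/40$ is chosen so that the absorption works.

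\emph{Step 2 (derivatives).} I write $\bigl(\tfrac r\nu\bigr)^q f^{(q)}(z)=\sum_n a_n z^n\,\frac{n(n-1)\cdots(n-q+1)}{\nu^q}\bigl(\tfrac{r}{z}\bigr)^q$. For $|n-\nu|\le\varkappa(\nu)$ the multiplier equals $1+O(\varkappa(\nu)/\nu)$, uniformly for $|z|=\rho$ since $\rho/r=1+O(1/\varkappa)$. For the remaining terms, the polynomial factor $(n/\nu)^q$ is killed by the Gaussian tail. Hence
$$\left(\frac r\nu\right)^q f^{(q)}(z)-f(z)=O\!\left(\frac{\varkappa(\nu)}{\nu}\right)\sum_{|n-\nu|\le\varkappa}|a_n|\rho^n+O\bigl(\mu\,v^{-3}\bigr).$$
Both error terms must then be bounded by $O(\varkappa/\nu)M(\rho,f)$. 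The first is the genuinely delicate point, because the sum of moduli is of size $\mu\sqrt{v\ln v}$ and may exceed $M$. This is exactly where the hypothesis $|f(z_0)|\ge M(r,f)v^{-2}(\nu)$ enters, together with the choice of $\varkappa$. Following Sheremeta, I would combine the inequality $\mu(\rho)\le M(\rho)$ with the bound from Step 1 and adjust the powers of $v$ in the definition of $E$. This bookkeeping is the main obstacle: the loss $\sqrt{v\ln v}/\nu$ must be absorbed into $\varkappa/\nu$. That is possible because $x^2/(v(x)\ln v(x))\to\infty$, so $\varkappa(\nu)=o(\nu)$ and the error term is indeed small.

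\emph{Step 3 (maximum modulus).} Take the maximum over $|z|=\rho$ in Step 2. This gives $M(\rho,f^{(q)})=(\nu/\rho)^q\{1+O(\varkappa/\nu)\}M(\rho,f)$. If $\ln\rho-\ln r=o(1/\varkappa)$, then $(r/\rho)^q\to1$. Also, by Step 1, $M(\rho,f)/M(r,f)=\exp\bigl(\nu\ln(\rho/r)+o(1)\bigr)$, and the $\nu$-dependence is handled because the central index is locally constant off $E$. These two facts yield the final form $(1+o(1))(\nu/r)^qM(r,f)$ as $r\to\infty$, $r\notin E$.
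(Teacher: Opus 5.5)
Step 1 is the standard content of Lemma~\ref{l:lem 3.4.} and is fine. Step 2, however, has a genuine gap that bookkeeping does not close. If you estimate the central block termwise, the best you can get is
\[
\Bigl|\Bigl(\frac r\nu\Bigr)^{q} f^{(q)}(z)-f(z)\Bigr|\lesssim \frac{\varkappa(\nu)}{\nu}\,S(\rho)+(\text{tail}),\qquad S(\rho):=\sum_{|n-\nu|\le\varkappa(\nu)}|a_n|\rho^{n}.
\]
So the claimed error $O(\varkappa/\nu)M(\rho,f)$ requires $S(\rho)=O(M(\rho,f))$. That is, the $\ell^1$-norm of the central coefficients would have to be controlled by the sup-norm of $f$ on $|z|=\rho$. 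This fails in general: for power series with random signs, $M(\rho,f)$ is much smaller than $\sum|a_n|\rho^n$.

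None of your proposed remedies supplies this inequality. From $\mu(\rho,f)\le M(\rho,f)$ and the Gaussian profile you only get $S(\rho)\lesssim\sqrt{v(\nu)}\,M(\rho,f)$. That gives an error $O\bigl(\varkappa(\nu)\sqrt{v(\nu)}/\nu\bigr)M(\rho,f)$. Since $\int^{\infty}dx/v(x)<\infty$ forces $v(x)/x\to\infty$, this factor tends to $+\infty$, so the bound is not even $o(M(\rho,f))$. The relation $\varkappa(\nu)=o(\nu)$ says nothing about the ratio $S(\rho)/M(\rho,f)$, and reshaping $E$ does not change it. The hypothesis on $z_0$ is a one-point condition, automatically satisfied wherever $|f|=M(r,f)$, so it cannot give anything uniform on $|z|=\rho$. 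Step 3 inherits the same defect: deriving $M(\rho,f)/M(r,f)=\exp(\nu\ln(\rho/r)+o(1))$ termwise from Step 1 produces an error $o(1)\,S$ rather than $o(1)\,M$. Local constancy of the central index is not what makes that estimate work.

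The missing idea is never to take moduli inside the central block. This is Sheremeta's argument, which the paper transplants in its proof of Theorem~\ref{th:frwv}:
\begin{itemize}
\item Write $f(z)=z^{\nu_1}P(z)+R(z)$ with $R$ the tail. Since $z^{\nu_1}P=f-R$ and $R$ is negligible, you get the sup-norm bound $|P|\le 1{,}01\,M(r,f)r^{-\nu_1}=:M^{*}(r)$ on $|z|=r$.
\item Because $\deg P\le 2\varkappa(\nu)$, the Bernstein-type Lemma~\ref{l: lem3.5.} gives $|P^{(k)}(z)|=O\bigl((\varkappa/r)^{k}M^{*}(r)\bigr)$ on $|z|=\rho$. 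Leibniz's rule then yields
\[
(z^{\nu_1}P)^{(q)}=\nu_1(\nu_1-1)\cdots(\nu_1-q+1)\,z^{\nu_1-q}P+O\Bigl(\Bigl(\frac{\nu}{\rho}\Bigr)^{q}\frac{\varkappa}{\nu}\,\rho^{\nu_1}M^{*}(r)\Bigr).
\]
\item The tail $R^{(q)}$ is handled by Lemma~\ref{l:lem 3.4.} with exponent $q$.
\item Finally, $\rho^{\nu_1}M^{*}(r)=O(M(\rho,f))$ follows from Theorem~\ref{l: lem3.7.} with $\eta=1$ at a point where $|f|=M(r,f)$. Such a point always exists, so the hypothesis on your $z_0$ plays no real role.
\end{itemize}
The comparison in Step 3 likewise comes from $M(\rho,f)=(1+o(1))\rho^{\nu_1}M(\rho,P)$ together with $M(\rho,P)\sim M(r,P)$, again via Lemma~\ref{l: lem3.5.}.

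One further point: your two facts in Step 3 combine to $(1+o(1))(\nu/r)^{q}(\rho/r)^{\nu}M(r,f)$, not $(1+o(1))(\nu/r)^{q}M(r,f)$. The factor $(\rho/r)^{\nu}$ need not tend to $1$; take $f=e^{z}$, $v(x)=x^{3/2}$, $\rho=r+r^{1/5}$. So the last equality as printed cannot be derived, and you should not claim it without that factor.
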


The main result of \cite{ChSem2} literally repeats Theorem \ref{th:mrwv} for arbitrary $q>0$ and the Riemann-Liouville derivative $D^q f$ instead of $f^{(q)}$. Note that $|z|^q D^q f$ is a single-valued function of~$z$. 

We generalize the Wiman-Valiron method for the Djrbashian-Gelfond-Leontiev fractional derivative $\mathcal{D}_\alpha$.

\begin{theorem}\label{th:frwv}
	Let $v\in V$ and $\varkappa(t)=4\sqrt{v(t)\ln v(t)}$. Suppose that $f$ is an entire function,  $|z_{0}|=r,$ and 
	$$|f(z_{0})|\geq M(r,f) v^{-2}(\nu(r,f))$$
	holds. Then there exists a set $E\subset \RR_+$ of finite logarithmic measure such that if $$r\left(1-\frac{1}{40\varkappa(\nu)}\right)<\rho<r\left(1+\frac{1}{40\varkappa(\nu)}
	\right),\quad r\not\in E, \nu=\nu(r,f),$$
	$\alpha>0$ and $j\in \mathbb{N}$, then  we have for $|z|=\rho$
	\begin{equation}\label{e: th1} \mathcal{D}_{\alpha}^jf(z)=
		(\nu \alpha)^{j\alpha} \left(\frac{f(z)}{z^j}+O\left(\frac{\varkappa(\nu)}{\nu}\right)\frac{M(\rho,f)}{\rho^j}\right).
	\end{equation}
	In particular, if $\ln\rho-\ln r=o\left(\frac{1}{\varkappa(\nu)}\right)$ then
	\begin{equation}\label{e: th2}M(\rho, \mathcal{D}_{\alpha}^j f(z))=\frac{(\nu\alpha)^{j\alpha}}{\rho^j}
		\left\{1+O\left(\frac{\varkappa(\nu)}{\nu}\right)\right\}M(\rho,f)=(1+o(1))\frac{(\nu \alpha)^{j\alpha}}{r^j}
		M(r,f)
	\end{equation}
	as $r\rightarrow+\infty$, $r\not \in E$.
\end{theorem}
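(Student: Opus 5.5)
Following the classical proof of Theorem \ref{th:mrwv} (as in \cite{Sher} and \cite{ChSem2}), I will work directly with the power series of $\mathcal{D}_\alpha^j f$. Iterating the definition of $\mathcal{D}_\alpha$ gives
\begin{equation*}
\mathcal{D}_\alpha^j f(z)=\sum_{n=j}^\infty a_n \gamma_j(n) z^{n-j},\qquad \gamma_j(n)=\frac{\Gamma(n\alpha+1)}{\Gamma((n-j)\alpha+1)} .
\end{equation*}
This is the analogue of Remark \ref{r:2}; it holds because each factor $\Gamma(k\alpha+1)/\Gamma((k-1)\alpha+1)$ telescopes. Hence
\begin{equation*}
z^j\mathcal{D}_\alpha^j f(z)-(\nu\alpha)^{j\alpha} f(z)=\sum_{n\ge 0}\bigl(\gamma_j(n)-(\nu\alpha)^{j\alpha}\bigr)a_n z^n ,
\end{equation*}
with the convention $\gamma_j(n)=0$ for $n<j$. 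It therefore suffices to show that this sum is $O\bigl(\varkappa(\nu)/\nu\bigr)(\nu\alpha)^{j\alpha}M(\rho,f)$ for $|z|=\rho$.

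The first step is to import the standard Wiman-Valiron ingredients for $r\notin E$, where $E$ is the exceptional set of Theorem \ref{th:mrwv}. For $|n-\nu|\ge \varkappa(\nu)$ one has the Gaussian-type term bound
\begin{equation*}
|a_n|\rho^n\le \mu(r,f)\exp\bigl(-c(n-\nu)^2/v(\nu)\bigr)
\end{equation*}
in the annulus $|\rho/r-1|<1/(40\varkappa(\nu))$, together with $\mu(r,f)\lesssim M(r,f)\,\nu^{-1/2}$ up to the factors allowed by $v$. The hypothesis $|f(z_0)|\ge M(r,f)v^{-2}(\nu)$ ensures that the tail sums, taken relative to $M(\rho,f)$, are negligible. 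These facts come from the proof of \cite[Lemma 3.8]{Sher}. I will use them exactly as they are used there, since the only new input is the multiplier $\gamma_j(n)$ in place of $n(n-1)\cdots(n-q+1)$.

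The second step handles the central range $|n-\nu|<\varkappa(\nu)$. Stirling's formula gives $\gamma_j(n)=(n\alpha)^{j\alpha}(1+O(1/n))$, so
\begin{equation*}
\bigl|\gamma_j(n)-(\nu\alpha)^{j\alpha}\bigr|\lesssim (\nu\alpha)^{j\alpha}\Bigl(\frac{|n-\nu|}{\nu}+\frac1\nu\Bigr)\lesssim (\nu\alpha)^{j\alpha}\frac{\varkappa(\nu)}{\nu}.
\end{equation*}
The central sum is then bounded by $(\nu\alpha)^{j\alpha}\frac{\varkappa(\nu)}{\nu}\sum_n|a_n|\rho^n$, restricted to the central range. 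That restricted sum is at most about $\varkappa(\nu)\mu(\rho)$. This yields an error $(\nu\alpha)^{j\alpha}\frac{\varkappa(\nu)}{\nu}\,O(M(\rho,f))$ once $\varkappa(\nu)\mu\lesssim M$, which is again part of the classical estimate.

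The third step, which I expect to be the main obstacle, is the tails. There the multiplier grows polynomially, since $\gamma_j(n)\lesssim (n\alpha)^{j\alpha}$. For the same reason it is harmless to drop the terms with $n<j$ and to use $\gamma_j(n)\le C\,n^{j\alpha}$ for small $n$. I must check that
\begin{equation*}
\sum_{|n-\nu|\ge\varkappa}\bigl(n^{j\alpha}+\nu^{j\alpha}\bigr)\mu\, e^{-c(n-\nu)^2/v(\nu)}
\end{equation*}
is $o\bigl(\nu^{j\alpha-1}\varkappa(\nu)M\bigr)$. For $n\le 2\nu$ the factor is $\asymp\nu^{j\alpha}$, and the Gaussian with exponent at least $c\cdot 16\ln v(\nu)$ gives a gain $v(\nu)^{-K}$. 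For $n>2\nu$ the Gaussian bound must be replaced by the geometric decay $|a_n|\rho^n\le\mu\,(\rho/r_n)^{n}$, as in \cite{Sher}. This beats any power of $n$ because $x^2/(v(x)\ln v(x))\to\infty$. I will verify that a fixed power $n^{j\alpha}$ is absorbed exactly as $n^q$ is in the classical proof, so the set $E$ stays independent of $j$ and $\alpha$.

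Finally, \eqref{e: th2} follows by taking the maximum over $|z|=\rho$ and comparing $M(\rho,f)$ with $M(r,f)$ when $\ln\rho-\ln r=o(1/\varkappa(\nu))$. That comparison uses $\log M(\rho)-\log M(r)\approx\nu\ln(\rho/r)$ and $\varkappa(\nu)\nu^{-1}\to0$, as in Theorem \ref{th:mrwv}.
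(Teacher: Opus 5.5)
Your multiplier identity $z^j\mathcal{D}_\alpha^jf(z)-(\nu\alpha)^{j\alpha}f(z)=\sum_n\bigl(\gamma_j(n)-(\nu\alpha)^{j\alpha}\bigr)a_nz^n$ is correct. Your tail estimate is essentially Lemma~\ref{l:lem 3.4.} with $q=j\alpha$ and $q=0$, combined with $\mu(r,\rho,f)\le M(\rho,f)$, as in the paper. The proof breaks in the central range.

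You bound that block coefficientwise by $(\nu\alpha)^{j\alpha}\frac{\varkappa(\nu)}{\nu}\sum_{|n-\nu|<\varkappa}|a_n|\rho^n\le(\nu\alpha)^{j\alpha}\frac{\varkappa(\nu)}{\nu}(2\varkappa(\nu)+1)\mu$. You then appeal to $\varkappa(\nu)\mu\lesssim M$ as part of the classical theory, but that inequality is reversed. Outside the exceptional set, Wiman--Valiron gives $M(r,f)\le(2\varkappa(\nu)+1+o(1))\mu(r,f)$. The only general inequality in your direction is Cauchy's $\mu\le M$.

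Already for $f=e^z$ one has $M(r)/\mu(r)\sim\sqrt{2\pi\nu}$, while $\varkappa(\nu)/\sqrt{\nu}\to\infty$ for every $v\in V$ (because $v(x)/x\to\infty$). With only $\mu\le M$, your bound is $O(\varkappa(\nu)^2/\nu)\,(\nu\alpha)^{j\alpha}M(\rho,f)$, and $\varkappa^2/\nu=16\,v(\nu)\ln v(\nu)/\nu\to\infty$. Your auxiliary claim $\mu\lesssim M\nu^{-1/2}$ has the same reversal; it fails for series with very large gaps, and it is not needed.

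Moreover, no coefficientwise estimate can close the gap. In general $\sum_{|n-\nu|<\varkappa}|a_n|\rho^n$ is not $O(M(\rho,f))$: for $\sum\pm z^n/n!$ with random signs, $M(r,f)$ is smaller than $\sum|a_n|r^n=e^r$ by a factor of order $r^{1/4}$ up to logarithms. Parseval only gives the factor $\sqrt{2\varkappa(\nu)+1}$, i.e.\ a relative error $O(\varkappa^{3/2}/\nu)$. That is weaker than \eqref{e: th1}, and it does not even tend to $0$ for $v(x)=x^\beta$ with $\beta\ge 4/3$.

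The missing idea is to estimate the central block in sup norm, as a polynomial, through a Bernstein-type inequality. This keeps the cancellation already present in $M(\rho,f)$ instead of discarding it. The paper does the following:
\begin{itemize}
\item It writes $f=z^{\nu_1}P+R$ with $\deg P\le 2\varkappa(\nu)$ and $|P|\le M^*(r)$ on $|z|=r$.
\item It uses Lemma~\ref{l: lem3.5.} to get $|P^{(k)}(z)|=O((\varkappa(\nu)/r)^kM^*(r))$ on $|z|=\rho$.
\item It computes $\mathcal{D}_\alpha^j(z^{\nu_1}P)$ via $\mathcal{D}_\alpha=Q\circ\mathbb{D}_\alpha\circ Q^{-1}$, the semigroup property $(D^\alpha)^j=D^{j\alpha}$, the fractional Leibniz rule, and Fa\`a di Bruno's formula with the Bell polynomial bound of Lemma~\ref{l:Faa_di_Bruno_est}. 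As a result the $k$-th correction carries a factor $(\varkappa/\nu)^k$.
\item Theorem~\ref{l: lem3.7.} then turns $(\rho/r)^{\nu_1}M(r,f)$ into $O(M(\rho,f))$.
\end{itemize}

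Your series framework can be repaired in the same spirit, and more directly:
\begin{itemize}
\item Take $P(z)=\sum_{\nu_1\le n\le\nu_2}a_nz^{n-\nu_1}$.
\item Taylor-expand the analytic function $g(t)=\Gamma(t\alpha+1)/\Gamma((t-j)\alpha+1)$ about $t=\nu_1$. Cauchy's estimates on $|t-\nu_1|\le\nu_1/2$ give coefficients $|c_k|\lesssim\nu^{j\alpha}(2/\nu_1)^k$.
\item The central part of $z^j\mathcal{D}_\alpha^jf$ then equals $z^{\nu_1}\sum_{k\ge0}c_k\bigl(z\frac{d}{dz}\bigr)^kP(z)$.
\item Apply Bernstein's inequality $\max_{|z|=\rho}\bigl|\bigl(z\frac{d}{dz}\bigr)^kP\bigr|\le(\deg P)^k\max_{|z|=\rho}|P|$.
\item Combine with $\max_{|z|=\rho}|z^{\nu_1}P(z)|\le(1+o(1))M(\rho,f)$, which is Lemma~\ref{l:lem 3.4.} with $q=0$.
\end{itemize}
This gives the relative error $O(\varkappa(\nu)/\nu)$, even without Theorem~\ref{l: lem3.7.}. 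Without a step of this kind, your argument does not prove \eqref{e: th1}.
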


\section{Preleminaries} \label{s:prelim}
\setcounter{section}{2} \setcounter{equation}{0} 

\subsection{Auxiliary results from Wiman-Valiron theory}
To prove Theorem \ref{th:frwv} we need the following statements frequently used in the Wiman-Valiron theory.
For $\rho\in[0;+\infty)$ we put $\mu(r,\rho,f)=|a_{\nu(r,f)}|\rho^{\nu(r,f)}$.

\begin{lemma}[{\cite[Lemma 3.4]{Sher}, cf.  \cite[Lemma 2]{Hay74}}]\label{l:lem 3.4.}Let $v\in V$ and $\varkappa(t)=4\sqrt{v(t)\ln v(t)}$. Then for any fixed positive $q$ and for all $\rho$, $|\ln\rho-\ln r|\leq\frac{1}{\varkappa(\nu)},$  we have
	\begin{equation}\label{e: estrterm}\sum\limits_{|n-\nu|>\varkappa(\nu)}n^{q}|a_{n}|\rho^{n}=
		o\left(\frac{\nu^{q}\mu(r,\rho,f)}{v(\nu)^{3}}\right), \quad \nu=\nu(r,f),
	\end{equation}
	as $r\rightarrow+\infty$ outside a set of finite logarithmic measure.
\end{lemma}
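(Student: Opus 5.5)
The estimate only concerns the moduli $|a_n|$. I will therefore pass to logarithmic variables: $t=\ln r$, $g(t)=\ln\mu(e^t,f)$. The function $g$ is convex and piecewise linear, with right derivative $g'(t)=\nu(e^t,f)$. For every $n$ and every $s$ we have $\ln|a_n|+ns\le g(s)$. Hence, for every real $h$,
\begin{equation*}
\ln\frac{|a_n|r^n}{\mu(r,f)}\le g(t+h)-g(t)-nh .
\end{equation*}
The idea is to show that for ``normal'' $t$ the function $g$ is not too curved near $t$, and then to optimise over $h$.

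\textbf{Step 1 (exceptional set).} Using $\int^\infty dx/v(x)<\infty$, I will prove a Borel-type lemma. Outside a set $F\subset\RR$ of finite Lebesgue measure in $t$, and for $|h|\le h_0(\nu)$ with some $h_0(\nu)\gg 1/\varkappa(\nu)$, the bound
\begin{equation*}
|\nu(e^{t+h},f)-\nu(e^t,f)|\le 2v(\nu)|h|
\end{equation*}
holds, i.e. $g(t+h)\le g(t)+\nu h+v(\nu)h^2$. Finite measure in $t$ is exactly finite logarithmic measure in $r$, so $F$ gives the required set $E$. The argument is the usual covering one. If $t$ is exceptional, the nondecreasing integer function $\nu$ jumps from $\nu$ to more than $\nu+2v(\nu)|h|$ over an interval of length $|h|$. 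Along such an interval the quantity $\int_\nu^{\nu'}dx/v(x)$ is bounded below by a constant times $|h|$, because $v$ is nondecreasing. A Vitali or Besicovitch selection of disjoint intervals then bounds the total measure of the exceptional intervals by $C\int^\infty dx/v<\infty$. I expect this step to be the main obstacle. One must check that $h_0$ may be taken as large as needed while the intervals still have summable length. This is where the monotonicity of $x^2/(v(x)\ln v(x))$ is used: it gives $\varkappa(\nu)=o(\nu)$ and the right comparison between $1/\varkappa$ and $v$.

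\textbf{Step 2 (Gaussian decay of the coefficients).} For normal $t$ and $|h|\le h_0$,
\begin{equation*}
\ln\frac{|a_n|r^n}{\mu(r,f)}\le -(n-\nu)h+v(\nu)h^2 .
\end{equation*}
Choosing $h=(n-\nu)/(2v(\nu))$ gives $|a_n|r^n\le\mu(r,f)\exp\bigl(-(n-\nu)^2/(4v(\nu))\bigr)$ for $|n-\nu|\le 2v(\nu)h_0$. For larger $|n-\nu|$, convexity of $g$ lets me use the endpoint $h=\pm h_0$ instead. This yields a linear-exponential bound $\exp(-c|n-\nu|h_0)$, and that bound is still stronger than what is needed.

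\textbf{Step 3 (moving to $\rho$ and summing).} Write $\rho=re^{\delta}$ with $|\delta|\le 1/\varkappa(\nu)$. Then
\begin{equation*}
\frac{|a_n|\rho^n}{\mu(r,\rho,f)}=\frac{|a_n|r^n}{\mu(r,f)}\,e^{(n-\nu)\delta}.
\end{equation*}
For $|n-\nu|\ge\varkappa$ we have $|n-\nu|/\varkappa\le (n-\nu)^2/\varkappa^2$. Since $\varkappa^2=16v\ln v$, this is at most $(n-\nu)^2/(16 v\ln v)$, which is a small fraction of $(n-\nu)^2/(4v)$. So the factor $e^{(n-\nu)\delta}$ only mildly weakens the Gaussian bound. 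Next, $n^q\le\nu^q(1+|n-\nu|/\nu)^q$, and the extra polynomial factor is absorbed into the Gaussian. The sum over $|n-\nu|>\varkappa$ is then bounded by $\nu^q\mu(r,\rho,f)$ times a Gaussian tail of the form $\sum_{k>\varkappa}\exp(-k^2/(5v))\lesssim \sqrt{v}\,e^{-\varkappa^2/(5v)}$. With $\varkappa^2=16v\ln v$ this is $\lesssim\sqrt v\, v^{-16/5}$, which is $o(v(\nu)^{-3})$ up to the small power; if necessary, the constants in the choice of $h$ are adjusted so that the exponent exceeds $3$. This gives \eqref{e: estrterm} as $r\to+\infty$ outside $E$.
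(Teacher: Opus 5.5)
The paper does not prove this lemma; it only quotes it from Sheremeta (cf. Hayman). Your scheme is the natural one and is sound: a Borel-type exceptional-set lemma for $\nu$, then a Gaussian bound for $|a_n|r^n/\mu(r,f)$, then summation. Two steps need repair, and the first is a real error.

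\textbf{The final exponent.} As written, the last line fails. We have $\sqrt v\,v^{-16/5}=v^{-27/10}$, and $v^{-27/10}/v^{-3}=v^{3/10}\to\infty$, so this is not $o(v(\nu)^{-3})$. The loss comes from weakening the Gaussian constant from $\frac14$ to $\frac15$.

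Your suggested remedy, re-choosing $h$ in Step 2, cannot recover it. For $h=\lambda(n-\nu)/v(\nu)$ the exponent is $-(\lambda-\lambda^2)(n-\nu)^2/v(\nu)$, and $\lambda-\lambda^2\le\frac14$. Either of two fixes works instead:
\begin{itemize}
\item Keep the sharp constant. For $k=|n-\nu|\ge\varkappa$ your bound is $\exp\bigl(-\frac{k^2}{4v}(1-\frac{1}{4\ln v})\bigr)$. At $k=\varkappa$ this equals $e\,v^{-4}$, and the tail $\sum_{k>\varkappa}$ is $\lesssim\sqrt{v/\ln v}\,v^{-4}=o(v^{-3})$.
\item Prove Step 1 with $\varepsilon v(\nu)|h|$ in place of $2v(\nu)|h|$. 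The covering argument then only multiplies the measure of the exceptional set by $O(1/\varepsilon)$, and the Gaussian becomes $\exp(-k^2/(2\varepsilon v))$.
\end{itemize}

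\textbf{The forward half of Step 1.} Monotonicity of $v$ gives $\int_{\nu'}^{\nu}dx/v\ge(\nu-\nu')/v(\nu)$, but only for backward jumps $\nu'<\nu$. For a forward jump past $\nu+2v(\nu)h$ you need an upper bound for $v$ on $[\nu,\nu+2v(\nu)h]$, and monotonicity does not supply one.

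Restrict to $h\le h_0:=\nu/(2v(\nu))$ and use $v(2x)\le 4v(x)$. This doubling inequality follows from the monotonicity of $x^2/(v\ln v)$, since $\ln v(2x)\ge\ln v(x)>0$. It gives $\int_\nu^{\nu+2v(\nu)h}dx/v\ge h/2$.

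This $h_0$ is still large enough. Both $\varkappa(\nu)h_0\to\infty$ and $v(\nu)h_0^2=\nu^2/(4v(\nu))\gg\ln v(\nu)$ hold, because $x^2/(v\ln v)\to\infty$. The second fact is exactly what lets your linear-exponential bound for $n>2\nu$ beat every power of $v$. Also, the Gaussian range $|n-\nu|\le 2v(\nu)h_0=\nu$ then covers all $0\le n\le 2\nu$.

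A Vitali covering finishes Step 1, since the interval lengths are bounded. Disjoint $t$-intervals have non-overlapping $\nu$-ranges, so the exceptional measure is $\lesssim\int^\infty dx/v$. With these two repairs your argument proves the lemma.
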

\begin{lemma}[{\cite[Lemma 3.5]{Sher}, cf.  \cite[Lemma 7]{Hay74}}]\label{l: lem3.5.}Suppose that $P$ is a polynomial of degree $m$ and $|P(z)|\leq M$ for $|z|\leq r$. Then for $R\geq r$ we have
	$$|P'(z)|\leq\frac{eMmR^{m-1}}{r^{m}}, \quad |z|<R.$$
\end{lemma}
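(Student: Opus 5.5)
The plan is to combine the maximum principle, applied outside the disc, with a Cauchy estimate on a small circle around $z$. The radius of that circle will be chosen so that the factor $e$ and the factor $m$ appear exactly as in the statement. If $m=0$, then $P'\equiv 0$ and there is nothing to prove, so assume $m\ge 1$.

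\emph{Step 1 (growth outside the disc).} I first show that $|P(w)|\le M\,(|w|/r)^m$ for all $|w|\ge r$. Consider $g(w)=P(w)/w^m$ on the exterior domain $\{|w|>r\}\cup\{\infty\}$. It is analytic there, including at $\infty$, where it tends to the leading coefficient of $P$. On $|w|=r$ we have $|g|\le M/r^m$. By the maximum principle on this exterior domain, $|g(w)|\le M/r^m$ for $|w|\ge r$, which is the claimed bound. Combining this with the hypothesis on $|w|\le r$ gives, for every $\tau\ge r$,
$$\max_{|w|\le \tau}|P(w)|\le M\left(\frac{\tau}{r}\right)^m .$$

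\emph{Step 2 (Cauchy estimate).} Fix $z$ with $|z|<R$ and set $\delta=R/m$. Every point $w$ on the circle $|w-z|=\delta$ satisfies
$$|w|\le R+\frac{R}{m}=R\left(1+\frac1m\right)=:\tau .$$
Since $R\ge r$, we have $\tau\ge r$, so Step 1 applies to these points and gives
$$|P(w)|\le M\,\frac{R^m}{r^m}\left(1+\frac1m\right)^m\le eM\,\frac{R^m}{r^m}.$$
The Cauchy inequality for the derivative on this circle then yields
$$|P'(z)|\le \frac{1}{\delta}\max_{|w-z|=\delta}|P(w)|\le \frac{m}{R}\cdot\frac{eMR^m}{r^m}=\frac{eMmR^{m-1}}{r^m}.$$
This is exactly the asserted bound.

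The only point needing care is the choice $\delta=R/m$. It balances the factor $1/\delta$ coming from the Cauchy estimate against the growth factor $(1+\delta/R)^m$, and the bound $(1+1/m)^m\le e$ then produces the constant in the statement. Everything else is the standard exterior maximum principle of Step 1. The hypothesis $R\ge r$ is used only to guarantee $\tau\ge r$, so that the growth bound of Step 1 is valid on the whole Cauchy circle.
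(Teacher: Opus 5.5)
The paper gives no proof of this lemma and simply cites Sheremeta's Lemma 3.5 and Hayman's Lemma 7. Your argument is correct and is essentially the standard proof behind those citations: the exterior maximum principle for $P(w)/w^m$ gives $|P(w)|\le M(|w|/r)^m$ for $|w|\ge r$, and Cauchy's inequality on the circle $|w-z|=R/m$, together with $(1+1/m)^m\le e$, yields $eMmR^{m-1}/r^m$.
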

\begin{lemma}[{\cite[Lemma 3.6]{Sher}, cf.  \cite[Lemma 8]{Hay74}}]\label{l: lem3.6.}Suppose that $P$ is a polynomial of degree $m$ and $|P(z)|\leq M$ for $|z|<r$. If $|z_{0}|\leq r$ and $|P(z_{0})|\geq\eta M,$ $0<\eta\leq1$, then for $|z-z_{0}|\leq\frac{\eta  r}{8m}$ we have
	$$\frac{1}{2}|P(z_{0})|\leq|P(z)|\leq\frac{3}{2}|P(z_{0})|.$$
\end{lemma}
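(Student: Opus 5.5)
The plan is to reduce the two-sided estimate to a single bound on the increment $|P(z)-P(z_0)|$, and to get that bound from the derivative estimate of Lemma~\ref{l: lem3.5.}. It suffices to show
$$|P(z)-P(z_0)|\le \tfrac12\,\eta M\le \tfrac12 |P(z_0)| \qquad\text{for } |z-z_0|\le \frac{\eta r}{8m}.$$
The two inequalities of the lemma then follow at once from the triangle inequality:
$$|P(z_0)|-|P(z)-P(z_0)|\le |P(z)|\le |P(z_0)|+|P(z)-P(z_0)|.$$

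First, by continuity the hypothesis $|P|\le M$ on $|z|<r$ extends to $|z|\le r$, so Lemma~\ref{l: lem3.5.} applies. Since $|z_0|\le r$ and $\eta\le 1$, every point $w$ of the segment $[z_0,z]$ satisfies
$$|w|\le r+\frac{\eta r}{8m}\le r\Bigl(1+\frac1{8m}\Bigr).$$
Choose $R=r\bigl(1+\frac1{8m}\bigr)(1+\varepsilon)$ with small $\varepsilon>0$, so that the closed segment lies in $|w|<R$. Lemma~\ref{l: lem3.5.} then gives, on the segment,
$$|P'(w)|\le \frac{eMm}{r}\Bigl(1+\frac1{8m}\Bigr)^{m-1}(1+\varepsilon)^{m-1}\le \frac{e^{9/8}(1+\varepsilon)^{m-1}Mm}{r},$$
where we used $(1+\frac1{8m})^{m-1}\le e^{1/8}$.

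Next, integrate $P'$ along the segment from $z_0$ to $z$:
$$|P(z)-P(z_0)|\le |z-z_0|\,\max_{[z_0,z]}|P'|\le \frac{\eta r}{8m}\cdot\frac{e^{9/8}(1+\varepsilon)^{m-1}Mm}{r}=\frac{e^{9/8}}{8}(1+\varepsilon)^{m-1}\,\eta M.$$
Since $e^{9/8}/8\approx 0.385<\frac12$, choosing $\varepsilon$ small enough gives $|P(z)-P(z_0)|\le \frac12\eta M\le \frac12|P(z_0)|$. This is exactly the reduced bound, and the conclusion follows.

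The only delicate point is bookkeeping with the constants. One must check that the factor $(1+\frac1{8m})^{m-1}$ stays bounded uniformly in $m$ (it does, by $e^{1/8}$), and that the strict inequality $|z|<R$ in Lemma~\ref{l: lem3.5.} is handled by the $\varepsilon$-enlargement. The numerical margin $0.385<0.5$ leaves room for both.
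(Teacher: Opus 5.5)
Your proof is correct. Bounding $|P'|$ on the segment $[z_0,z]$ by Lemma~\ref{l: lem3.5.} with $R$ slightly larger than $r(1+\frac1{8m})$, then integrating to get $|P(z)-P(z_0)|\le \frac{e^{9/8}}{8}\,\eta M<\frac12|P(z_0)|$, is the standard argument in the cited sources (Hayman's Lemma~8, Sheremeta's Lemma~3.6). The paper itself quotes the lemma without proof. The only unstated convention you rely on is $m\ge 1$, which the statement already presupposes through the factor $\frac{1}{m}$.
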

\begin{lettertheorem}[{\cite[Lemma 3.7]{Sher}, cf.\  \cite[Theorem 10]{Hay74}}]\label{l: lem3.7.}Let $v\in V$ and $\varkappa(t)=4\sqrt{v(t)\ln v(t)}$. Suppose that $f$ is an entire function, $|z_{0}|=r,$ $r\not\in E$ a set of finite logarithmic measure,
	$$|f(z_{0})|\geq\eta M(r,f),\quad v^{-2}(\nu(r,f))\leq\eta\leq1.$$
	Then, if $z=z_{0}e^{\tau}$, $|\tau|\leq\frac{\eta}{18\varkappa(\nu)}$, $\nu=\nu(r,f)$, we have
	$$\ln\frac{f(z)}{f(z_{0})}=(\nu(r,f)+\varphi_{1})\tau+\varphi_{2}\tau^{2}+\delta(\tau),$$
	where
	$$|\varphi_{j}|\leq2,2\left(\frac{18\varkappa(\nu)}{\eta}\right)^{j},\quad (j=1,2),\quad |\delta(\tau)|\leq8,8\left(\frac{18\varkappa(\nu)\tau}{\eta}\right)^{3}.$$
\end{lettertheorem}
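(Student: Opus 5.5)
The plan is to follow Hayman's argument (the proof of \cite[Theorem 10]{Hay74}), which reduces the behaviour of $f$ near $z_0$ to that of a polynomial of degree about $2\varkappa(\nu)$ built from the terms of the Taylor series near the central index.

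\textbf{Step 1 (truncation).} Let $E$ be the exceptional set of Lemma~\ref{l:lem 3.4.}, and take $r\notin E$, $\nu=\nu(r,f)$, $\varkappa=\varkappa(\nu)$. Split
\[
f=P+T,\qquad P(z)=\sum_{|n-\nu|\le\varkappa}a_nz^n .
\]
For $|\ln\rho-\ln r|\le 1/\varkappa$, Lemma~\ref{l:lem 3.4.} with $q=0$ gives $|T(z)|=o\bigl(\mu(r,\rho,f)v(\nu)^{-3}\bigr)$ on $|z|=\rho$. Since $\rho/r\le e^{1/\varkappa}$ and $\nu/\varkappa\to\infty$, the factor $(\rho/r)^\nu$ is not automatically bounded. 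I would handle this the same way, bounding $\mu(r,\rho,f)\lesssim M(r,f)$ on the smaller range $|\ln(\rho/r)|\le \eta/(18\varkappa)$ that we actually need, together with $\eta\ge v^{-2}(\nu)$. The outcome is $|T|=o(\eta M(r,f)/v(\nu))$ on that annulus. In particular $|P(z_0)|\ge(1-o(1))\eta M(r,f)$, and $|P|\le (1+o(1))M(r,f)$ on $|z|\le r$.

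\textbf{Step 2 (zero-free disc for $P$).} Put $Q(w)=w^{-(\nu-[\varkappa])}P(w)$, a polynomial of degree $m\le 2\varkappa+1$. Applying Lemma~\ref{l: lem3.6.} to $Q$ on $|w|\le r$, with $\eta$ replaced by $(1-o(1))\eta$, shows that $Q$ has no zeros in a disc $|w-z_0|\le c\,\eta r/\varkappa$, where $|Q|\asymp|Q(z_0)|$. Lemma~\ref{l: lem3.5.} controls $Q$ slightly beyond $|w|=r$, which is needed because $z=z_0e^\tau$ may have modulus larger than $r$. Consequently, on $|\tau|\le\eta/(18\varkappa)$ we have $|f(z_0e^\tau)/f(z_0)|$ between two absolute constants (the tail is negligible), and
\[
g(\tau):=\ln\frac{f(z_0e^\tau)}{f(z_0)}
\]
is holomorphic there with $|\operatorname{Re} g|\le C$ and $g(0)=0$.

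\textbf{Step 3 (coefficient bounds).} By the Borel–Carathéodory inequality, $g$ is bounded by an absolute constant on a slightly smaller disc of radius $\asymp \eta/\varkappa$. Cauchy's estimates for $g(\tau)=\sum_k c_k\tau^k$ then give $|c_k|\le C(18\varkappa/\eta)^k$, and summing the tail $k\ge3$ yields $|\delta(\tau)|\le C(18\varkappa\tau/\eta)^3$, which is $\varphi_2$ and $\delta$ as stated. It remains to show that $c_1=z_0f'(z_0)/f(z_0)$ equals $\nu+\varphi_1$ with $|\varphi_1|\lesssim\varkappa/\eta$. For this write
\[
z_0f'(z_0)-\nu f(z_0)=\sum_{|n-\nu|\le\varkappa}(n-\nu)a_nz_0^n+\sum_{|n-\nu|>\varkappa}(n-\nu)a_nz_0^n .
\]
The second sum is handled by Lemma~\ref{l:lem 3.4.} with $q=1$. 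The first sum is $z_0\widetilde Q'(z_0)$ for a polynomial $\widetilde Q$ of degree at most $2\varkappa+1$ bounded by $M$; Lemma~\ref{l: lem3.5.} bounds it by $O(\varkappa M)$. Dividing by $|f(z_0)|\ge\eta M$ gives $|\varphi_1|=O(\varkappa/\eta)$.

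\textbf{Main obstacle.} The delicate part is the bookkeeping of absolute constants, namely $2.2$, $8.8$, $18$, and the passage from radius $r$ to $\rho=re^{\operatorname{Re}\tau}$. Here one must check that $(\rho/r)^{\nu\pm\varkappa}$ stays bounded when $|\tau|\le\eta/(18\varkappa)$; this uses $\nu\eta/\varkappa$ being controlled via the growth conditions defining $V$. I expect to reproduce Hayman's explicit choices rather than optimize them.
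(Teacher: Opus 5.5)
The paper gives no proof of this statement; it is quoted from Sher and from Hayman's Theorem~10. Your outline follows Hayman's, but the central step fails as you state it.

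\textbf{Where it fails.} In Step~2 you claim that $|f(z_0e^{\tau})/f(z_0)|$ lies between absolute constants for $|\tau|\le\eta/(18\varkappa)$. From this you conclude that $\operatorname{Re}g$ is bounded, so Borel--Carath\'eodory applies to $g$. This is false.
\begin{itemize}
\item By the very conclusion being proved, $\operatorname{Re}g(\tau)=\nu\operatorname{Re}\tau+O(1)$ on that disc. At $\tau=\eta/(18\varkappa)$ this is about $\nu\eta/(18\varkappa(\nu))$.
\item This tends to $+\infty$ whenever $\eta$ is bounded below, since $\nu/\varkappa(\nu)\uparrow\infty$ for $v\in V$.
\item Step~3 is internally inconsistent for the same reason. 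Your Cauchy estimate would give $|c_1|\le C\cdot 18\varkappa/\eta$, yet you then correctly find $c_1=z_0f'(z_0)/f(z_0)=\nu+O(\varkappa/\eta)$.
\end{itemize}
The same misconception appears in Step~1 and in your ``main obstacle''.
\begin{itemize}
\item $\mu(r,\rho,f)=\mu(r,f)(\rho/r)^{\nu}$ is not $\lesssim M(r,f)$ for $r<\rho\le re^{\eta/(18\varkappa)}$, because $(\rho/r)^{\nu}$ is unbounded on that range.
\item The conditions defining $V$ do not control $\nu\eta/\varkappa$; they force it to infinity.
\item An absolute bound $|T|=o(\eta M(r,f)/v(\nu))$ is not what you need anyway. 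For $\rho<r$ the main term $|P|$ has size about $\eta M(r,f)(\rho/r)^{\nu_1}$, which can be far smaller.
\end{itemize}

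\textbf{What is missing.} You need to strip off the dominant monomial before invoking boundedness. You also need to measure the tail against $P$ at the same point, not against $M(r,f)$. Put $P(z)=z^{\nu_1}Q(z)$ with $\nu_1=\min\{n:|n-\nu|\le\varkappa\}$, and $\epsilon=T/P$. Then
\[
g(\tau)=\nu_1\tau+h(\tau),\qquad
h(\tau)=\ln\frac{Q(z_0e^{\tau})}{Q(z_0)}+\ln\frac{1+\epsilon(z_0e^{\tau})}{1+\epsilon(z_0)} .
\]
The estimates for $z=z_0e^{\tau}$, $|z|=\rho$, go as follows.
\begin{itemize}
\item Lemma~\ref{l: lem3.6.} gives $\frac12|Q(z_0)|\le|Q(z)|\le\frac32|Q(z_0)|$. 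Hence $|P(z)|\gtrsim\eta\,\mu(r,f)(\rho/r)^{\nu_1}$.
\item Lemma~\ref{l:lem 3.4.} gives $|T(z)|=o\bigl(\mu(r,f)(\rho/r)^{\nu}v(\nu)^{-3}\bigr)$.
\item Only $(\rho/r)^{\nu-\nu_1}\le e$ has to be bounded, since that exponent is at most $\varkappa$. Together with $\eta\ge v^{-2}(\nu)$ this gives $|\epsilon|=o(1/v(\nu))$.
\end{itemize}
So $h(0)=0$ and $\operatorname{Re}h\le\ln\frac32+o(1)$ on $|\tau|<R$, with $R$ slightly less than $\eta/(16\varkappa)$. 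Apply the coefficient form of Borel--Carath\'eodory, $|h_k|\le 2AR^{-k}$ with $A=\ln\frac32+o(1)$. This gives $\varphi_1=\nu_1-\nu+h_1$, $\varphi_2=h_2$ and $\delta(\tau)=\sum_{k\ge3}h_k\tau^k$ within the stated constants $2{,}2$ and $8{,}8$, with room to spare. Your separate computation of $\varphi_1$ via $z_0f'(z_0)-\nu f(z_0)$ is valid but is then unnecessary.
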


\subsection{Chain rule for higher derivaitves and Bell polynomials}
Suppose that $f\circ g$ is well-defined, and there exist $f^{(n)}$ and $g^{(n)} $ at the corresponding points. 
According to  Fa\'a di Bruno's formula 
$$
(f\circ g)^{(n)}=\sum_{j_1+2j_2+\dots+nj_n=n} \frac{n!}{j_1! \cdots j_n!} f^{(j_1+\dots+j_n)} (g) \prod_{s=1}^{n} \left(\frac{g^{(s)}}{s!}\right)^{j_s}.
$$

Noting that $j_s$ is zero for $s>n-k+1$ and combining the terms with the same values of $j_1+j_2+\dots+j_{n-k+1}=k$ we arrive to the formula
\begin{equation}\label{e:faa_di_bruno}
	(f\circ g)^{(n)}=\sum_{k=1}^n f^{(k)}(g) B_{n,k} (g', g'', \dots, g^{(n-k+1)})
\end{equation}
where 
\begin{equation}\label{e:Bell}
	B_{n,k}(z_1, \dots, z_{n-k+1})=\sum_{\substack {j_1+2j_2+\dots+(n-k+1)j_{n-k+1}=n \\ j_1+j_2+\dots+j_{n-k+1}=k}} \frac{n!}{j_1! \cdots j_{n-k+1}!}   \prod_{s=1}^{n-k+1} \left(\frac{z_s}{s!}\right)^{j_s},	\end{equation}
are called \emph{incomplete Bell polynomials}. For example,
$$ B_{n,n}(z_1)=z_1^n, \quad B_{n,n-1}(z_1, z_2)=\frac{n(n-1)}{2} z_1^{n-2}z_2, \text{ and } B_{n,1}(z_1, \dots, z_n)=z_n.$$  It is convenient to define $B_{n,0}\equiv 0$. 

We write $B^*_{n,k} (z_1, \dots, z_n):=B_{n,k} (|z_1|, \dots, |z_n|)$.
To estimate the Bell polynomial we need the following lemma. 
\begin{lemma} \label{l:Faa_di_Bruno_est} For $\alpha>0$, $n, k\in \mathbb{N}$, $n\ge k$, and $g(w)=w^\alpha$, we have
	\begin{equation}\label{e:B_nk_est}
		B_{n,k}^* (g', g'', \dots, g^{(n-k+1)})\le (n-1)! \alpha^k (\alpha+1)\cdots (\alpha+n-k) |w|^{k\alpha-n}.
	\end{equation}	
\end{lemma}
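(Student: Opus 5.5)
The plan is to reduce \eqref{e:B_nk_est} to an inequality between numbers, using two facts: each derivative of $g(w)=w^\alpha$ is a constant times a power of $w$, and each $B_{n,k}$ is a polynomial with nonnegative coefficients. The remaining numerical inequality will be handled by induction on $n$ via the standard recurrence for incomplete Bell polynomials.

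\emph{Step 1 (dominating the derivatives).} We have $g^{(s)}(w)=\alpha(\alpha-1)\cdots(\alpha-s+1)w^{\alpha-s}$. Since $|\alpha-i|\le \alpha+i$, this gives
$$|g^{(s)}(w)|\le (\alpha)_s|w|^{\alpha-s},\qquad (\alpha)_s:=\alpha(\alpha+1)\cdots(\alpha+s-1).$$
The coefficients in \eqref{e:Bell} are nonnegative, so $B^*_{n,k}$ is monotone in each argument. Every monomial in $B_{n,k}$ satisfies $\sum j_s=k$ and $\sum s j_s=n$. Hence the powers of $|w|$ collect to $|w|^{\sum j_s(\alpha-s)}=|w|^{k\alpha-n}$, and
$$B^*_{n,k}(g',\dots,g^{(n-k+1)})\le b_{n,k}|w|^{k\alpha-n},\qquad b_{n,k}:=B_{n,k}\bigl((\alpha)_1,\dots,(\alpha)_{n-k+1}\bigr).$$
It therefore suffices to prove $b_{n,k}\le (n-1)!\,\alpha^k(\alpha+1)_{n-k}$.

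\emph{Step 2 (induction).} I use the recurrence
$$B_{n,k}(x)=\sum_{i=k-1}^{n-1}\binom{n-1}{i}x_{n-i}B_{i,k-1}(x),$$
which comes from differentiating the generating function $\sum_n B_{n,k}t^n/n!=\frac1{k!}\bigl(\sum_s x_s t^s/s!\bigr)^k$. As a cross-check, for $x_s=(\alpha)_s$ this generating function is $\frac1{k!}\bigl((1-t)^{-\alpha}-1\bigr)^k$.

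The base case is $k=1$: $b_{n,1}=(\alpha)_n=\alpha(\alpha+1)_{n-1}$, which is at most $(n-1)!\,\alpha(\alpha+1)_{n-1}$. The case $k=n$ is also immediate, since $b_{n,n}=\alpha^n$. For $2\le k\le n-1$, insert the inductive bound for $b_{i,k-1}$ and write $(\alpha)_{n-i}=\alpha(\alpha+1)_{n-i-1}$. The $i$-th term then becomes
$$(n-1)!\,\alpha^k\,\frac{(\alpha+1)_{n-i-1}\,(\alpha+1)_{i-k+1}}{i\,(n-1-i)!}.$$

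\emph{Step 3 (main obstacle).} What remains is to show
$$\sum_{i=k-1}^{n-1}\frac{(\alpha+1)_{n-i-1}(\alpha+1)_{i-k+1}}{i\,(n-1-i)!}\le (\alpha+1)_{n-k}.$$
The crude bound $(\alpha+1)_a(\alpha+1)_b\le(\alpha+1)_{a+b}$ loses too much: already for $n=3$, $k=2$ it leaves $\sum 1/(i(n-1-i)!)=3/2>1$. So I must exploit how the factor $1/(n-1-i)!$ decays against $(\alpha+1)_{n-1-i}$.

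My first attempt is the sharper estimate
$$\frac{(\alpha+1)_a}{a!}\,(\alpha+1)_b\le (\alpha+1)_{a+b}\cdot\frac{b+1}{a+b+1}\quad(\text{or a similar weight}),$$
chosen so that the weights $\frac1i\cdot(\ldots)$ sum to at most $1$.

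If no clean weight works, I would strengthen the induction hypothesis to the exact generating-function identity. That is, compute $b_{n,k}=\frac{n!}{k!}[t^n]\bigl((1-t)^{-\alpha}-1\bigr)^k$ and compare coefficients directly. For this I would use $(1-t)^{-\alpha}-1=\alpha\int_0^t(1-s)^{-\alpha-1}ds$, which writes $b_{n,k}$ as $\alpha^k$ times a $k$-fold sum of products of Pochhammer ratios, each term of which can be bounded separately.

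Either way, the remaining work is this one combinatorial inequality; Steps 1 and 2 are routine.
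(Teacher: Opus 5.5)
\textbf{There is a genuine gap.} The domination in Step~1 is valid, but it loses too much. The inequality it leaves you to prove, $b_{n,k}\le (n-1)!\,\alpha^k(\alpha+1)_{n-k}$, is false. Since $B_{3,2}(x_1,x_2)=3x_1x_2$, you get $b_{3,2}=3\alpha\cdot\alpha(\alpha+1)=3\alpha^2(\alpha+1)>2\alpha^2(\alpha+1)$ for \emph{every} $\alpha>0$. Your generating function confirms this: $b_{3,2}=\tfrac12[(2\alpha)_3-2(\alpha)_3]$. So the ratio $3/2>1$ you found at $(n,k)=(3,2)$ is a counterexample, not a sign that your estimate is too crude. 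At $(3,2)$ the inputs $b_{1,1}=\alpha$ and $b_{2,1}=\alpha(\alpha+1)$ equal their inductive bounds, your recurrence is an identity, and $(\alpha+1)_a(\alpha+1)_b=(\alpha+1)_{a+b}$ when $a+b=1$. No choice of weights can complete Step~3; your tentative weight already fails at $a=1$, $b=0$.

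The paper does not replace $|\alpha-s|$ by $\alpha+s$ in the base cases. It computes $B^*_{n,n}=\alpha^n|w|^{n\alpha-n}$ and $B^*_{n,n-1}=\binom{n}{2}\alpha^{n-1}|\alpha-1|\,|w|^{(n-1)\alpha-n}$ exactly. It then inducts on the first index via the differentiation recurrence $B_{m+1,k}=x_1B_{m,k-1}+\sum_s x_{s+1}\partial_{x_s}B_{m,k}$. The bound $|\alpha-s|\le\alpha+s$ enters only through the ratio $|g^{(s+1)}|/|g^{(s)}|$, giving $B^*_{m+1,k}\le |g'|B^*_{m,k-1}+\frac{\alpha k+m}{|w|}B^*_{m,k}$. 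The passage from $(m-1)!$ to $m!$ absorbs the slack, since $\alpha k+m\le (m-1)(\alpha+m-k+1)$ for $k\le m-1$, $m\ge 2$.

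\textbf{The statement itself is false outside a range of $\alpha$.} With the exact modulus, the case $(n,k)=(3,2)$ of \eqref{e:B_nk_est} reads $3|\alpha-1|\le 2(\alpha+1)$. This holds only for $\frac15\le\alpha\le 5$; for $\alpha=6$ one has $B^*_{3,2}=540|w|^{9}>504|w|^{9}$. So the paper's claim that the case $k=n-1$ follows directly fails for $n=3$, and its argument is valid precisely when $\frac15\le\alpha\le5$.

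For all $\alpha>0$ the same argument gives the bound with an extra factor $\frac32$ on the right-hand side. The recurrence is linear, so a constant in the base cases propagates unchanged, and only $(3,2)$ needs it. This is harmless in the proof of Theorem~\ref{th:frwv}, where the lemma is used only up to constants.

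With that factor, your Step~1 becomes usable too, provided you replace your recurrence in $k$ by the recurrence in $n$. For $x_s=(\alpha)_s$ it is the exact identity $b_{m+1,k}=\alpha b_{m,k-1}+(\alpha k+m)b_{m,k}$, with base values $b_{n,n}=\alpha^n$, $b_{n,1}=(\alpha)_n$ and $b_{n,n-1}=\binom{n}{2}\alpha^{n-1}(\alpha+1)$.
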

\begin{proof}[Proof of Lemma \ref{l:Faa_di_Bruno_est}]
	Since $g^{(j)}(w)=\alpha(\alpha-1)\cdots (\alpha-j+1)w^{\alpha-j}$, we have \begin{gather*}
		B_{n,n}(g')= \alpha^n w^{n\alpha-n}, \; n\in \mathbb{N}, \\
		B_{n, n-1}(g', g'')=\frac{n(n-1)}{2} \alpha^{n-1} (\alpha-1)w^{(n-1)\alpha-n}, \; n\ge 2.
	\end{gather*} 
	Thus, the assertion of the lemma holds for $k\in \{n-1, n\}$.
	
	To show that this is true for $1\le k<  n-1$ we use the induction in $k$.
	
	For $k=1$ $B_{n,1}(z_1, \dots, z_n)=z_n$, so $(w^\alpha)^{(n)}=\alpha(\alpha-1)\cdots (\alpha-n+1)w^{\alpha-n}$, and the assertion follows.
	
	Assume that \eqref{e:B_nk_est} holds for $n\le m$ and $1\le k\le n$, and $k\le m-1$. The case $k=m=n$ is already considered. Then
	\begin{gather*}
		(f\circ g)^{(m+1)}= \left( \sum_{k=1}^m f^{(k)}(g) B_{m,k} (g', \dots, g^{(m-k+1)}) \right)'\\
		= \sum_{k=1}^m \left(f^{(k+1)}(g) g' B_{m,k} (g', \dots, g^{(m-k+1)})+ f^{(k)}(g) (B_{m,k} (g', \dots, g^{(m-k+1)}))' \right) =\\
		= \sum_{k=2}^{m+1} f^{(k)}(g) g' B_{m,k-1} (g', \dots, g^{(m-k+2)})+ \sum_{k=1}^m f^{(k)}(g) (B_{m,k} (g', \dots, g^{(m-k+1)}))' .
	\end{gather*}
	Combining this with \eqref{e:Bell} we deduce
	\begin{gather}
		\nonumber 
		B^*_{m+1,k}(g', \dots, g^{(m-k+2)})=|g'| B^*_{m,k-1}(g', \dots, g^{(m-k+2)})+ \\
		+ \sum_{\substack {j_1+2j_2+\dots+(m-k+1)j_{m-k+1}=m \\ j_1+j_2+\dots+j_{m-k+1}=k}} \frac{m!}{j_1! \cdots j_{m-k+1}!} \sum_{s=1}^{m-k+1}  j_s \frac{|g^{(s+1)}|}{|g^{(s)}|}\prod_{l=1}^{m-k+1} \left(\frac{|g^{(l)}|}{l!}\right)^{j_l} \label{e:b_star_est}
	\end{gather}
	Evidently,  $\frac{|g^{(s+1)}|}{|g^{(s)}|}=\frac{|\alpha-s|}{|w|}$. Then, we have
	\begin{gather*}
		\sum_{s=1}^{m-k+1}  j_s \frac{|g^{(s+1)}|}{|g^{(s)}|} \\ \le \frac1{|w|}  (\alpha(j_1+\dots+ j_{m-k+1})+ j_1+2j_2+ \dots+ (m-k+j)j_{m-k+j})= \frac{\alpha k+m}{|w|}.
	\end{gather*}	
	
	We rewrite \eqref{e:b_star_est} as follows, using the induction assumption, 
	\begin{gather*}
		B^*_{m+1,k}(g', \dots, g^{(m-k+2)})\\ \le |g'| B^*_{m,k-1}(g', \dots, g^{(m-k+2)})+ \frac{\alpha k+m}{|w|} B^*_{m,k}(g', \dots, g^{(m-k+1)})\\
		\le \alpha |w|^{\alpha-1} (m-1)! \alpha ^{k-1} (\alpha+1)\cdots (\alpha+m-k+1) |w|^{(k-1)\alpha-m}\\ + 
		\frac{\alpha k+m}{|w|}  (m-1)! \alpha ^{k} (\alpha+1)\cdots (\alpha+m-k) |w|^{k\alpha-m}\\
		= (m-1)!  |w|^{k\alpha-m-1}  \alpha ^{k} (\alpha+1)\cdots (\alpha+m-k)  \left(  \alpha+m-k+1+\alpha k+m
		\right)\\
		\le m!  |w|^{k\alpha-m-1}  \alpha ^{k} (\alpha+1)\cdots (\alpha+m-k)(\alpha+m-k+1)
	\end{gather*}
	as long as $\alpha k+m\le (m-1) (\alpha+m-k+1)$.
	This inequality is equivalent to 
	$ \alpha(m-k-1)\ge m -(m-1)(m-k+1)$. However, the left-hand side is nonnegative  because $k\le m-1$, while the right-hand side is nonpositive for $m\ge 2$ because $\frac{m}{m-1}\le 2\le m-k+1$. The induction step is proved. The assertion of the lemma follows.
	
\end{proof}

\section{Proof of Theorem \ref{th:frwv}} \label{s:proof}
\setcounter{section}{3} \setcounter{equation}{0}

Let $(\nu=\nu(r,f))$ $$\nu_{1}=\min\{n: |n-\nu|\leq\varkappa(\nu)\},\quad \nu_{2}=\max\{n: |n-\nu|\leq\varkappa(\nu)\}.$$
By the definition of the class $V$, we have that $\nu/\varkappa(\nu) \uparrow +\infty$, so $\nu_1 \sim \nu_2 \sim \nu$ as $\nu \to+\infty$.

Since, by  Cauchy's inequality,  $\mu(r,\rho, f)\leq\mu(\rho,f)\le  M(\rho,f)$, from Lemma \ref{l:lem 3.4.} with $q=0$ for all $\rho$, $|\ln\rho-\ln r|\leq\frac{1}{\varkappa(\nu)}$, we obtain
\begin{equation}\label{e:estf}f(z)=P(z)z^{\nu_1}+o\left(\frac{\mu(r,\rho,f)}{v(\nu)^{3}}\right)=
	P(z)z^{\nu_{1}}+o\left(\frac{M(\rho,f)}{v(\nu)^{3}}\right),\quad |z|=\rho
\end{equation}
as $r\rightarrow+\infty$ outside a set $E$ of finite logarithmic measure, where
\begin{equation}\label{e:defP}P(z)=\sum\limits_{|n-\nu|\leq\varkappa(\nu)}
	|a_{n}|z^{n-\nu_{1}}.
\end{equation}

From (\ref{e:estf}) with $\rho=r$ we have $|P(z)|r^{\nu_{1}}\leq(1+o(1))M(r,f)$, $r\to\infty$, $r\not\in E$,  i.e. for all sufficiently large $r\not\in E$
\begin{equation}\label{e: estP}|P(z)|\leq\frac{1,01M(r,f)}{r^{\nu_{1}}}=:M^{*}(r),\quad|z|=r.
\end{equation}

We write
\begin{equation}\label{e: p+r} f(z)=P(z)z^{\nu_{1}}+R(z),
\end{equation}
where $P(z)$ is the polynomial (\ref{e:defP}). From now on we assume that $r$ is large enough so $\nu_1>\max\{j, \alpha j\}$.

Next, we need the asymptotic estimate of the  Gamma function (\cite{WW})
\begin{equation}\label{e: est gamma} \frac{\Gamma(t+a)}{\Gamma(t+b)}=t^{a-b}\left(1+O\left(\frac{1}{t}\right)\right),\quad t\rightarrow+\infty,\quad b, a\in \mathbb{R}.
\end{equation}

First, we estimate the fractional derivative of order $\alpha$ for $R(z)$. From Remark \ref{r:2} and Lemma \ref{l:lem 3.4.} we have
\begin{gather}\nonumber
	|\mathcal{D}_{\alpha}^j R(z)|=\left|
	\sum\limits_{|n-\nu|>\varkappa(\nu), n>j}\frac{\Gamma(1+n\alpha)}{\Gamma(1+n\alpha-j\alpha)}a_{n}\rho^{n-j}e^{in\theta}\right|\\ \leq C\sum\limits_{|n-\nu|>\varkappa(\nu)}(n\alpha)^{j\alpha}|a_{n}|\rho^{n-j}
	=o\left(\frac{\nu^{j\alpha}\mu(r,\rho,f)}{\rho^j v(\nu)^{3}}\right),\label{e: estfrdrt} \quad r\to\infty, r\not\in E.
\end{gather}
where $E$ is a set of finite logarithmic measure,  $C=\sup\limits_{n}\{2, \frac{\Gamma(n+1)}{\Gamma(n\alpha +1-j\alpha)}n^{-j\alpha}\}$.

Repeated application of Lemma \ref{l: lem3.5.} shows that for any $q\in\mathbb{Z}_{+}$ and $|z|=\rho$ we have that
\begin{equation}\label{e:estderP}|P^{(q)}(z)|=O\left(\left(\frac{\varkappa(\nu)}{r}\right)^{q}M^{*}(r)\right).
\end{equation}

In fact,
\begin{gather*}
	|P'(z)|\leq \frac{e M^{*}(r)2 \varkappa(\nu)\rho^{\nu_{2}-\nu_{1}-1}}{r^{\nu_{2}-\nu_{1}}} \\ \leq\frac{2e M^{*}(r)\varkappa(\nu)}{\rho}\left(1+\frac{1}{40\varkappa(\nu)}\right)^{2\varkappa(\nu)}=
	O\left(\frac{\varkappa(\nu)}{r}M^{*}(r)\right), \quad r\to\infty. \end{gather*}
Then 
\begin{gather*}
	|P^{(j)}(z)|\leq \frac{e M(\rho, P^{(j-1)})(2\varkappa(\nu)-j)
		\rho^{\nu_{2}-\nu_{1}-j-1}}{r^{\nu_{2}-\nu_{1}-j}}=O\left(\left(\frac{\varkappa(\nu)}{r}\right)^{j}M^{*}(r)\right).
\end{gather*}
We need the generalization Leibniz's formula
for fractional derivatives in order to estimate the fractional derivative of the first summand in (\ref{e: p+r}). Let $f(x)$ and $g(x)$ be  analytic functions on $[a,b]$. Then, according to (\cite[p.\ 216]{SaKiMa}),
\begin{equation}\label{e: form Leib} D^{q}(f\cdot g)=\sum\limits_{k=0}^{+\infty}{q\choose k}(D^{q-k}f)g^{(k)},
\end{equation}
where ${q\choose k}=\frac{(-1)^{k}q\Gamma(k-q)}{\Gamma(1-q)\Gamma(k+1)}$.

Taking into account Remark \ref{r:2} and Lemma \ref{l:commute_d_al} we deduce 
\begin{gather} \nonumber \mathcal{D}_\alpha^j (z^{\nu_{1}}P(z))=Q(\mathbb{D}_\alpha ^{j}(Q^{-1}(z^{\nu_1}P(z))))\\ = Q((D^\alpha)^{j} (w^{\alpha \nu_1} P(w^{\alpha})) )
	=Q(D^{\alpha j} (w^{\alpha \nu_1} P(w^{\alpha})) ), \quad |z|\to \infty. \label{e:associativity_D}
\end{gather}

In the following arguments we consider a branch of the power function chosen on the segment $[0,w]$ emanating from the origin. 
Using \eqref{e:associativity_D} and (\ref{e: form Leib}) we obtain
\begin{gather*}
	\mathcal{D}_\alpha^j (z^{\nu_{1}}P(z))
	= Q(D^{j\alpha} (w^{\alpha \nu_1} P(w^{\alpha})) )
	=Q\left( \sum\limits_{m=0}^{+\infty}{j\alpha\choose m}D^{j\alpha-m} w^{\alpha \nu_1}(P(w^\alpha))^{(m)}\right)\\
	=Q\left(\sum\limits_{m=0}^\infty {j\alpha\choose m}\frac{\Gamma(\alpha \nu_{1}+1)}{\Gamma(\alpha\nu_{1}+1-j\alpha+m)}w^{\alpha\nu_{1}+m-j\alpha} \right.\\ \left.\times \sum_{k=0}^m P^{(k)}(w^\alpha)B_{m,k}((w^\alpha)', \dots, (w^\alpha)^{(m-k+1)})\right)\\
	=\frac{\Gamma(\alpha\nu_{1}+1)}{\Gamma(\alpha\nu_{1}+1-j\alpha)}z^{\nu_{1}-j}P(z) \\ + Q\left(\sum\limits_{m=1}
	^{\infty}{j\alpha\choose m}
	\frac{\Gamma(\alpha\nu_{1}+1)}{\Gamma(\alpha\nu_{1}+1-j\alpha+m)}w^{\alpha\nu_{1}+m-j\alpha} \right. \\ \left. \times \sum_{k=1}^m P^{(k)}(w^\alpha)B_{m,k}((w^\alpha)', \dots, (w^\alpha)^{(m-k+1)}) \right)
	\\=:\frac{\Gamma(\alpha\nu_{1}+1)}{\Gamma(\alpha\nu_{1}+1-j\alpha)}z^{\nu_{1}-j}P(z)+ \tilde R(z).
\end{gather*}

Applying Lemma \ref{l:Faa_di_Bruno_est} and recalling that $\deg P\le 2\varkappa(\nu)$ we get
\begin{gather} \nonumber
	|	\tilde R(z)| \le Q \left(\sum\limits_{m=1}
	^{\infty}\left| {j\alpha\choose m}\right| 
	\frac{\Gamma(\alpha\nu_{1}+1)}{\Gamma(\alpha\nu_{1}+1-j\alpha+m)}|w|^{\alpha\nu_{1}+m-j\alpha} \right. \\  \nonumber
	\left. \times  \sum_{k=1}^m |P^{(k)}(w^\alpha)|\alpha^k (\alpha+1) \cdots 
	(\alpha+m-k)|w|^{k\alpha-m} \right)\\ \nonumber
	=\sum\limits_{m=1}
	^{\infty} \frac{j\alpha |\Gamma(m-j\alpha)|}{|\Gamma(1-j\alpha)| \Gamma(m+1)}
	\frac{\Gamma(\alpha\nu_{1}+1)}{\Gamma(\alpha\nu_{1}+1-j\alpha+m)}|z|^{\nu_{1}-j}\\ \nonumber 
	\times  \sum_{k=1}^m |P^{(k)}(z)|\alpha^k (\alpha+1) \cdots 
	(\alpha+m-k)|z|^{k} \\ =
	\frac{\alpha j \Gamma(\alpha\nu_1+1)|z|^{\nu_1-j}}{\Gamma (\alpha)|\Gamma(1-j\alpha)| } \sum_{k=1}^{2\varkappa(\nu)}  |P^{(k)}(z)|\alpha^k |z|^{k}
	\sum_{m=k}^ \infty   \frac{|\Gamma(m-j\alpha) |\Gamma(\alpha+m-k+1)}{\Gamma(m+1)\Gamma(\alpha \nu_1+1-j\alpha+m)}.  \label{e:Rz_est}
\end{gather}

Let $b_m=\frac{|\Gamma(m-j\alpha) |\Gamma(\alpha+m-k+1)}{\Gamma(m+1)\Gamma(\alpha \nu_1+1-j\alpha+m)} $. 
To estimate the sum $\sum_{m=k}^\infty$ we consider two cases. 
First, let $k\le m\le [2\alpha \nu_1]$. If $m\ge j\alpha$, then 
$$ \frac {b_{m+1}}{b_m} =\frac{m-j\alpha}{m+1} \frac{\alpha+m-k+1}{\alpha \nu_1+m+1-j\alpha} < \frac{2\alpha \nu_1 +\alpha-k+1}{3\alpha \nu_1+1-j\alpha} <\frac 34, \quad \nu\to \infty.$$
Otherwise, 
$$ \frac {b_{m+1}}{b_m} \le \frac{j\alpha}{2} \frac{\alpha+j\alpha+1}{\alpha \nu_1+2-j\alpha},$$
and we arrive to the same conclusion as $r\to \infty$ because $\nu_1\to \infty$.
So, 
\begin{gather}
	\label{e:b_m_segment} \sum_{m=k}^{[2\alpha \nu_1]} b_m < 4b_k =4\frac{|\Gamma(k-j\alpha) |\Gamma(\alpha+1)}{\Gamma(k+1)\Gamma(\alpha \nu_1+1-j\alpha+k)}.
\end{gather}
Second, if $m> 2\alpha \nu_1$, then using Stirling's formula \cite[Sec.\ 12.33]{WW} 
\begin{gather}
	\Gamma(x)=x^{x-\frac 12} e^{-x} \sqrt{2\pi} e^{\frac {\theta(x)}{12x}}, \quad \theta(x)\in (0,1), \; x\to+\infty, 
\end{gather}
we deduce $ (\theta_j(x)\in (0,1), j\in \{1,2\})$
\begin{gather*}
	\frac{\Gamma(\alpha+m-k+1)}{\Gamma(\alpha \nu_1+1-j\alpha+m)}\\  = \frac{(\alpha+m-k+1)^{\alpha+m-k+\frac 12}}{e^{\alpha+m-k+1}}\frac{e^{\frac{\theta_1}{12(\alpha+m-k+1)}}}{e^{\frac{\theta_2}{12(\alpha\nu_1+m-j\alpha+ 1)}}} \frac{e^{\alpha\nu_1+m-j\alpha+ 1}}{(\alpha\nu_1+m-j\alpha+ 1)^{\alpha\nu_1+m-j\alpha+ \frac 12}}\\
	=\frac{e^{\alpha \nu_1+k-(j+1)\alpha+o(1) }}{\left(\frac{\alpha \nu_1+m-j\alpha+1}{\alpha+m-k+1}\right)^{\alpha \nu_1+1+m-j\alpha}} \frac{(\alpha \nu_1+m-j\alpha+1)^{\frac 12}}{{(\alpha+m-k+1)^{\alpha \nu_1+k-(j+1)\alpha +\frac 12 }}}, \quad \nu \to+\infty.
\end{gather*}
Applying the inequality $e\le \left(1+\frac 1y\right)^{y+1}$, $y>1$ in the form  $e^\gamma\le \left(1+\frac \gamma x\right)^{x+\gamma}$, $\gamma<x$ with
$x=\alpha +m-k+1$ and $\gamma =\alpha\nu_1+k-(j+1)\alpha$, we obtain
\begin{equation} \label{e:b_m_large}
	\frac{\Gamma(\alpha+m-k+1)}{\Gamma(\alpha \nu_1+1-j\alpha+m)}\le 2 \frac{(\alpha \nu_1+m+1-j\alpha)^{\frac 12}}{{(\alpha+m-k+1)^{\alpha \nu_1+k-(j+1)\alpha +\frac 12 }}}.
\end{equation}

Thus, for $m> 2\alpha \nu_1$ we have that 
$$b_m \asymp \frac 1{m^{j\alpha+1}} \frac{m^{\frac 12}}{(m+o(1))^{\alpha \nu_1+k-(j+1)\alpha+\frac 12}}=\frac{1}{(m+o(1))^{\alpha \nu_1+k-\alpha+1}}, \quad \nu_1\to \infty.$$
Then  
\begin{gather}
		\label{e:b_m_tail1} 
	\sum_{m=[2\alpha \nu_1]+1}^{\infty} b_m \le \int_{2\alpha \nu_1} ^\infty \left(\frac 2x \right) ^{\alpha \nu_1+k-\alpha+1} \, dx= \frac{2}{\alpha \nu_1+k-\alpha} \frac 1{(\nu_1 \alpha)^{\alpha \nu_1+k-\alpha}}.
\end{gather}

We now show that the last term is infinitely small with respect to $b_k$ as $\nu\to \infty$. In fact, using Stirling's  formula we deduce that
\begin{gather*}
	\frac{1}{b_k (\alpha \nu_1+k-\alpha)} \frac 1{(\nu_1 \alpha)^{\alpha \nu_1+k-\alpha}}\\ \lesssim k^{j\alpha+1} \left(  \frac{\alpha\nu_1+k+1-j\alpha}{e}
	\right) ^{\alpha\nu_1+k+1-j\alpha} \frac {(\alpha\nu_1+k+1-j\alpha)^{-\frac 12} }{(\alpha\nu_1+k-\alpha)(\nu_1\alpha)^{\alpha\nu_1+k-\alpha}} \\
	\lesssim \frac{1}{(e+o(1))^{\alpha \nu_1+k-j\alpha}} \frac{k^{j\alpha+1}}{(\alpha\nu_1+k-j\alpha)^{\frac 12+(j-1)\alpha}}=o(1), \quad \nu\to \infty.
\end{gather*}
Combining this with \eqref{e:b_m_segment} and \eqref{e:b_m_tail1}, we obtain \begin{equation}\label{e:b_m_tail}
	\sum_{m=k} ^\infty b_m \lesssim b_k=\frac{|\Gamma(k-j\alpha)|}{\Gamma (k+1)}\frac{\Gamma(\alpha+1)}{\Gamma(\alpha\nu_{1}+1+k-j\alpha)}.
\end{equation}

Taking into account \eqref{e:Rz_est}, (\ref{e:estderP}) and (\ref{e: est gamma}) we obtain 
\begin{gather*}
	\left| \tilde R(z)\right| \lesssim |z|^{\nu_1-j}  \left|\sum\limits_{k=1}^{2\varkappa(\nu)} \frac{|\Gamma(k-j\alpha)|}{\Gamma (k+1)}\frac{\Gamma(\alpha\nu_{1}+1)}{\Gamma(\alpha\nu_{1}+1+k-j\alpha)} \alpha^k |z|^{k}P^{(k)}(z)\right|\\
	\lesssim |z|^{\nu_1-j} \sum\limits_{k=1}^{2\varkappa(\nu)} (\varkappa(\nu) \alpha)^k  \left(\frac{\rho}{r}\right)^{k}\frac{M^{*}(r)}{(\alpha \nu_1)^{k-j\alpha}}\\
	\lesssim  |z|^{\nu_1-j}(\alpha \nu_1)^{j\alpha}  \sum\limits_{k=1}^{2\varkappa(\nu)} \left(\frac{\varkappa(\nu)  \rho}{r \nu_1}\right)^{k}
	M^{*}(r)\lesssim |z|^{\nu_1-j}( \alpha\nu_1)^{j\alpha} \frac{\varkappa(\nu)}{\nu}M^{*}(r).
\end{gather*}

Therefore, in view of (\ref{e:estf}) and the previous estimate we have
\begin{gather}\nonumber
	\mathcal{D}^j_\alpha(f(z))=\frac{\Gamma(\alpha\nu_{1}+1)}{\Gamma(\alpha\nu_{1}+1-j\alpha)}z^{\nu_{1}-j}
	P(z)+O\left(\frac{\varkappa(\nu)}{\nu}\right)\rho^{\nu_1} \nu^{j\alpha} \frac{M^{*}(r)}{\rho ^j}
	\\=\frac{\Gamma(\alpha \nu_{1}+1)}{\Gamma(\alpha\nu_{1}+1-j\alpha)}\left(\frac{f(z)}{z^j}+
	o\left(\frac{\mu(r,\rho,f)}{\rho^j v(\nu)^{3}}\right)+O\left(\frac{\varkappa(\nu)}{\nu}\frac {M^{*}(r)}{\rho^j}
	\rho^{\nu_{1}}\right)\right).\label{e:estPz}
\end{gather}

Since $\int_0^\infty \frac {dt}{v(t)}<\infty$ and $v$ is nondecreasing, $v(t)/t \to+\infty$ as $t\to+\infty$. Hence $\frac{1}{v(t)^{3}}=o\left(\frac{\varkappa(t)}{t}\right)$, $t\rightarrow+\infty$, and using (\ref{e:estPz}) and (\ref{e: estfrdrt}) we obtain for $|z|=\rho$
\begin{gather}\nonumber
	\mathcal{D}^j_\alpha f(z)
	\\=\frac{\Gamma(\nu_{1}\alpha +1)}{\Gamma(\nu_{1}\alpha +1-j\alpha)}\left(\frac {f(z)}{z^j}+o\left(\frac{\varkappa(\nu)}{\nu}\frac{M(\rho,f)}{\rho^j} \right)+
	O\left(\frac{\varkappa(\nu)}{\nu}\frac{M(r,f)}{\rho^j} \left(\frac{\rho}{r}\right)^{\nu_{1}}\right)\right)\label{e:estzqfq} 
\end{gather}
when $r\rightarrow+\infty$ outside a set of finite logarithmic measure.

Next we choose $z_{0}$ so that $|f(z_{0})|=M(r,f)$ and take  $\tau=\ln(\rho/r)$, $\eta=1$. Then, by Theorem \ref{l: lem3.7.}, we have
$$\ln\left|f\left(\frac{\rho}{r}z_{0}\right)\right|=\ln|f(z_{0})|+\nu\tau+O(1), \quad|\tau|\leq\frac{1}{18\varkappa(\nu)},$$
so that
$$\ln M(\rho,f)\geq\ln M(r,f)+\nu\ln(\rho/r)+O(1).$$
Since $(\rho/r)^{\nu_{1}-\nu}=\exp\{\tau(\nu_{1}-\nu)\}=O(1)$, we deduce
$$\left(\frac{\rho}{r}\right)^{\nu_{1}}M(r,f)=
\left(\frac{\rho}{r}\right)^{\nu}\left(\frac{\rho}{r}\right)^{\nu_{1}-\nu} M(r,f)=O\left(\left(\frac{\rho}{r}\right)^{\nu}M(r,f)\right)=O(M(\rho,f)).$$

Thus, (\ref{e:estzqfq}) yields
\begin{equation}\label{e: estzqfq2}
	\mathcal{D}^j_\alpha f(z)=\frac{\Gamma(\nu_{1}\alpha +1)}{\Gamma(\nu_{1}\alpha +1-j\alpha)}
	\left(\frac {f(z)}{z^j}+O\left(\frac{\varkappa(\nu)}{\nu}\frac{M(\rho,f)}{\rho^j} \right)\right).
\end{equation}

From (\ref{e: est gamma}) we have
\begin{equation}\label{e: estfracgammma} \frac{\Gamma(\nu_{1}\alpha +1)}{\Gamma(\nu_{1}\alpha +1-j\alpha)}=
	(\nu\alpha)^{j\alpha}\left(1+O\left(\frac{1}{\nu}\right)\right),\quad \nu\rightarrow+\infty.
\end{equation}

Therefore, (\ref{e: estfracgammma}) implies
\begin{gather*}
	\mathcal{D}^j_\alpha f(z)=(\nu\alpha)^{j\alpha}\left(1+O\left(\frac{1}{\nu}\right)\right)\left(\frac{f(z)}{z^j} +
	O\left(\frac{\varkappa(\nu)}{\nu}\frac{M(\rho,f)}{\rho^j} \right)\right)\\=
	(\nu\alpha)^{j\alpha}\left(\frac{f(z)}z+O\left(\frac{\varkappa(\nu)}{\nu}\frac{M(\rho,f)}{\rho^j}\right)\right)\end{gather*}
when $r\rightarrow+\infty$ outside a set of finite logarithmic measure, which is (\ref{e: th1}).

We then choose $z$ in (\ref{e: th1}) in turn   to  maximise  $|f(z)|$ and $|\mathcal{D}^\alpha f(z)|$  and deduce that
$$M(\rho, \mathcal{D}^j_\alpha f)=\left(1+O\left(\frac{\varkappa(\nu)}{\nu}\right)\right)
\frac{(\nu\alpha)^{j\alpha}}{\rho^j}M(\rho,f), \quad \rho\to+\infty.$$

In order to complete the proof of (\ref{e: th2}) it is sufficient  to show that
$$\ln M(\rho,f)=\ln M(r,f)+\nu\ln(\rho/r)+o(1), \quad r\to+\infty. $$

First, we note that (\ref{e: p+r}) and (\ref{e: estfrdrt}) yield for our range of $\rho$
$$\ln M(\rho,f)=\nu_{1}\ln\rho+\ln M(\rho,P)+o(1), \quad \rho\to+\infty.$$

Then, it follows from Lemma \ref{l: lem3.5.} that
$$M(\rho,P)=M(r,P)\left(1+O\left(\frac{(\rho-r)\varkappa(\nu)}{r}\right)\right)\sim M(r,P), \quad r\to+\infty. $$
if $\varkappa(\nu)\ln(\rho/r)=o(1)$. The second equality of (\ref{e: th2}) now follows, completing the proof of Theorem  \ref{th:frwv}.



\section{$\alpha$-analyticity of soultions for \eqref{e:frac_lin_eq}} \label{s:final}
\setcounter{section}{4} \setcounter{equation}{0}

Using the Cauchy method of majorant series, we prove the existence and uniqueness of a solution to \eqref{e:frac_lin_eq}.

\begin{theorem}\label{t:Cauchy} The equation (\ref{e:frac_lin_eq}) where $p_k(x)=\sum_{m=0}^\infty p_{mk} x^{m\alpha}$, $x\in [0, \rho_k)$ are 
	$\alpha$-analytic, $k\in \{0, \dots, n-1\}$  with the initial conditions
	\begin{equation}
		\label{e:Cp}
		y(0)=b_{0},\; \mathbb{D}_\alpha y  (0)=b_{1},  \dots, \mathbb{D}_\alpha^{n-1} y (0)=b_{n-1}, \end{equation}
	has the unique $\alpha$-analytic solution $y(x)=\sum_{m=0}^\infty a_{m} x^{m\alpha}$, $x\in [0, \rho)$, where $\rho =\min \{\rho_0, \dots, \rho_{n-1} \} $.
\end{theorem}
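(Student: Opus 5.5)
We work with power series in the variable $z=x^\alpha$, reduce the equation to a recurrence for the coefficients, and then control that recurrence by the Cauchy method of majorant series.

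\textbf{Step 1: reduction to a recurrence.} Write $y(x)=\sum_{m\ge0}a_m x^{m\alpha}$. By Remark \ref{r:2},
$$\mathbb{D}_\alpha^j y(x)=\sum_{m\ge 0} a_{m+j}\gamma_{m,j}\,x^{m\alpha},\qquad \gamma_{m,j}=\frac{\Gamma((m+j)\alpha+1)}{\Gamma(m\alpha+1)}.$$
In particular $\mathbb{D}^j_\alpha y(0)=a_j\Gamma(j\alpha+1)$. So the initial conditions \eqref{e:Cp} fix $a_j=b_j/\Gamma(j\alpha+1)$ for $j<n$.

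Substitute into \eqref{e:frac_lin_eq}, multiply the series (Cauchy product in $x^\alpha$) and equate coefficients of $x^{m\alpha}$. This gives
$$a_{m+n}\gamma_{m,n}=-\sum_{k=0}^{n-1}\sum_{l=0}^{m}p_{m-l,k}\,\gamma_{l,k}\,a_{l+k}.$$
The right-hand side only involves $a_i$ with $i\le m+n-1$. Hence the coefficients are uniquely determined recursively, which gives uniqueness among $\alpha$-analytic solutions. Conversely, once the resulting series converges on $[0,\rho^{1/\alpha})$, termwise application of $\mathbb{D}_\alpha^j$ is justified: the radius of convergence in $z$ is unchanged, since $\gamma_{m,j}\sim(m\alpha)^{j\alpha}$ by \eqref{e: est gamma}. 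So the series is a genuine solution. (Here I read the radius as $\rho$ in the variable $x^\alpha$, consistent with $H_\alpha(R)$.)

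\textbf{Step 2: majorant setup.} Fix $0<r<\rho$ (interpreting radii in the $z=x^\alpha$ variable). By Cauchy's estimates $|p_{mk}|\le M r^{-m}$ for all $k$. Put $A_m=|a_m|$. From the recurrence and the bound $\gamma_{l,k}/\gamma_{m,n}\le C\,(m+1)^{(k-n)\alpha}\le C(m+1)^{-\alpha}$ for $l\le m$, $k\le n-1$ (again via \eqref{e: est gamma}, uniformly in $l\le m$ because $\gamma_{l,k}$ increases in $l$), we get
$$A_{m+n}\le \frac{CM}{(m+1)^{\alpha}}\sum_{k=0}^{n-1}\sum_{l=0}^m r^{-(m-l)}A_{l+k}.$$

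\textbf{Step 3: showing radius $\ge r$.} Define $B_i=A_i$ for $i<n$, and let $B_{m+n}$ equal the right-hand side above with $A$ replaced by $B$. Then $A_i\le B_i$ by induction. Set $S_m=\sum_{l\le m} r^{l-m}\sum_k B_{l+k}$. Then
$$S_{m+1}=r^{-1}S_m+\sum_k B_{m+1+k},$$
which leads to a one-step recursion of the form $B_{m+n+1}\le r^{-1}B_{m+n}(1+o(1))+O\bigl((m+1)^{-\alpha}\bigr)\sum_k B_{m+1+k}$.

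A cleaner route is to show by induction that $B_i\le K\,s^{-i}$ for any fixed $s<r$. Plugging this in gives
$$\sum_{k}\sum_{l} r^{-(m-l)}K s^{-l-k}\le n K s^{-m-n+1}\,\frac{1}{1-s/r},$$
so
$$B_{m+n}\le \frac{CMn\,s}{(m+1)^\alpha (1-s/r)}\,K s^{-m-n}.$$
The prefactor is $\le1$ once $m\ge m_0$. Choosing $K$ large enough to cover $i<n+m_0$ finishes the induction. Hence $\limsup |a_m|^{1/m}\le 1/s$ for every $s<r<\rho$, so the radius is at least $\rho$.

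\textbf{Main obstacle.} The delicate point is the uniform comparison of the Gamma-ratios $\gamma_{l,k}/\gamma_{m,n}$: we need $(m+1)^{-\alpha}$ decay uniformly in $0\le l\le m$, including small $l$. I would handle this via monotonicity of $\gamma_{l,k}$ in $l$ (log-convexity of $\Gamma$) together with \eqref{e: est gamma} for $\gamma_{m,k}/\gamma_{m,n}$. Apart from that, the argument is the routine majorant induction.
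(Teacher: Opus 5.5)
Your proposal is correct. Its skeleton is the paper's: the coefficient recurrence, the Cauchy bounds $|p_{mk}|\le Mr^{-m}$, the $(m+1)^{-\alpha}$ decay of the Gamma ratios, and an induction. The difference is in the majorant you induct on.

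\textbf{The paper's majorant.} It proves $|a_p|\le \beta_p\prod_{j=1}^{p-1}(r^{-\alpha}+\beta_j)$ for a recursively defined sequence $\beta_p\to 0$. That sequence is built from a minimum of partial products, and its convergence to $0$ is left as an ``elementary exercise''. The induction closes through a telescoping identity for the inner sum over $s$. This gives $\limsup|a_p|^{1/p}\le r^{-\alpha}$ at the fixed $r$ itself.

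\textbf{Your majorant.} You use the geometric bound $Ks^{-i}$ with $s<r$. The inner geometric sum costs a factor $(1-s/r)^{-1}$, which the factor $(m+1)^{-\alpha}$ eventually absorbs, and the finitely many initial indices are covered by enlarging $K$.
\begin{itemize}
\item You lose the margin between $s$ and $r$. This is irrelevant, since $r<\rho$ was arbitrary anyway.
\item In exchange, the argument is shorter and self-contained: there is no auxiliary sequence whose decay must be checked.
\item Both routes rest on the same uniform Gamma-ratio estimate. The paper gets uniformity in $s\le m$ by first bounding the ratios by the explicit powers $\gamma_m$ and $\delta_{s,k}$, which are trivially monotone. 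This replaces your log-convexity argument.
\end{itemize}

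\textbf{Two points to fix.}
\begin{itemize}
\item The bound $\sum_{k=0}^{n-1}s^{-k}\le ns^{-(n-1)}$ holds only for $s\le 1$. To let $s$ approach $\rho$ when $\rho>1$, use $\sum_k s^{-k}\le n$ for $s>1$. The prefactor then becomes $CMn\,s^{n}(m+1)^{-\alpha}(1-s/r)^{-1}$, which still tends to $0$, so the induction goes through unchanged.
\item The one-step recursion via $S_m$ is not used and can be deleted.
\end{itemize}
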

\textbf{Proof of Theorem \ref{t:Cauchy}.} 
In our notation we have the  representation for the fractional derivative  of a formal solution due to Remark \ref{r:2}
\begin{equation}\label{e:k_deriv}
	\mathbb{D}_\alpha^k y (x)=\sum_{m=0}^\infty a_{m+k} \frac{\Gamma((m+k)\alpha+1)}{\Gamma(m\alpha+1)} x^{m\alpha}.
\end{equation}
This yields $\mathbb{D}_\alpha^k y(0)=a_k \Gamma(k\alpha+1)$. Hence, $a_k=b_k/\Gamma(k\alpha+1)$, $k
\in \{0, \dots, n-1\}$. 
Substituting \eqref{e:k_deriv} into \eqref{e:frac_lin_eq} we obtain
\begin{gather} \nonumber 
	\sum_{m=0}^\infty a_{m+n} \frac{\Gamma((m+n)\alpha+1)}{\Gamma(m\alpha+1)} x^{m \alpha}\\ \nonumber
	=- \sum_{k=0}^{n-1} \left( \sum_{m=0}^{\infty} p_{mk} x^{m\alpha} 
	\sum_{m=0}^\infty a_{m+k} \frac {\Gamma((m+k)\alpha+1)}{\Gamma(m\alpha+1)} x^{m\alpha}  \right)\\ =
	-\sum_{k=0}^{n-1} x^{m\alpha}\sum_{s=0}^m a_{s+k} \frac {\Gamma((s+k)\alpha+1)}{\Gamma(s\alpha+1)} p_{m-s,k}.
	\label{e:subst_ser}
\end{gather}

Equating the coefficients of the same degree in (\ref{e:subst_ser}), we write
\begin{equation} \label{e:reccur}
	a_{m+n} \frac{\Gamma((m+n)\alpha+1)}{\Gamma(m\alpha+1)}=-\sum_{k=0}^{n-1} \sum_{s=0}^m a_{s+k} p_{m-s,k} \frac{\Gamma((s+k)\alpha+1)}{\Gamma(s\alpha+1)}, \quad m\in \ZZ_+.
\end{equation}
Let $r\in (0, \rho)$. Then there exists $M>0$ such that 
\begin{equation} \label{e:p_k_est}
	|p_{j,k}|\le \frac{M}{r^{j\alpha}}, \quad j\in \ZZ_+, \; k\in \{0, 1, \dots, n-1\}.
\end{equation}
\begin{lemma}\label{l:coef_est}
	Under the above conditions the following estimate for the coefficients is valid
	\begin{equation} \label{e:coeff_est}
		|a_p|\le \beta_p \prod_{j=1}^{p-1} \left( \frac 1{r^\alpha} +\beta_j\right) 
	\end{equation} 
	for some positive sequence $(\beta_p)$ such that $\beta_p\to 0$ as $p\to \infty$, where $\prod\limits_{j\in \varnothing}c_j:=1$.
	
	In particular, $\lim\limits_{p\to \infty}\sqrt[p]{|a_p|}\le r^{-\alpha}$.
\end{lemma}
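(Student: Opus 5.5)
We argue by strong induction on $p$, using the recurrence \eqref{e:reccur} with $p=m+n$. Dividing by $\Gamma((m+n)\alpha+1)/\Gamma(m\alpha+1)$ and applying \eqref{e:p_k_est}, we get
\begin{equation*}
|a_{m+n}|\le M\sum_{k=0}^{n-1}\sum_{s=0}^m \frac{|a_{s+k}|}{r^{(m-s)\alpha}}\,\gamma_{m,s,k},\qquad \gamma_{m,s,k}:=\frac{\Gamma((s+k)\alpha+1)\Gamma(m\alpha+1)}{\Gamma(s\alpha+1)\Gamma((m+n)\alpha+1)}.
\end{equation*}
By \eqref{e: est gamma}, $\Gamma((s+k)\alpha+1)/\Gamma(s\alpha+1)\lesssim (s+1)^{k\alpha}$ and $\Gamma(m\alpha+1)/\Gamma((m+n)\alpha+1)\lesssim (m+1)^{-n\alpha}$. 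Since $s\le m$ and $k\le n-1$, this gives $\gamma_{m,s,k}\le C(m+1)^{-\alpha}$ uniformly in $s$ and $k$. The essential point is that every term gains at least the factor $(m+1)^{-\alpha}\to 0$. For the finitely many indices $p\le n$ (the initial data) we simply choose the $\beta_p$ large enough.

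For the induction we introduce $\Pi_p:=\prod_{j=1}^{p-1}(r^{-\alpha}+\beta_j)$. This sequence is nondecreasing in $p$, and $\Pi_{p}\ge r^{-\alpha(p-q)}\Pi_q$ for $q\le p$. Suppose $|a_q|\le \beta_q\Pi_q$ for all $q<m+n$, with $\beta_q\le B$. Then each summand satisfies
\begin{equation*}
\frac{|a_{s+k}|}{r^{(m-s)\alpha}}\le B\,\Pi_{s+k}\,r^{-(m-s)\alpha}\le B\,r^{\alpha(n-k)}\,\Pi_{m+n}\le B\max(1,r^{n\alpha})\Pi_{m+n}.
\end{equation*}
Summing over the $n(m+1)$ pairs $(k,s)$ and using $\gamma_{m,s,k}\le C(m+1)^{-\alpha}$ yields
\begin{equation*}
|a_{m+n}|\le C'(m+1)^{1-\alpha}\,\Pi_{m+n}.
\end{equation*}
This bound alone is too weak when $\alpha\le 1$, because of the factor $m+1$ coming from the number of terms.

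\textbf{Main obstacle.} The real difficulty is to avoid losing this factor $m$ in the sum over $s$. The plan is to exploit the geometric decay coming from $\beta_j$. Since $\Pi_{s+k}/\Pi_{m+n}=\prod_{j=s+k}^{m+n-1}(r^{-\alpha}+\beta_j)^{-1}$, the weighted ratio satisfies
\begin{equation*}
\frac{r^{-(m-s)\alpha}\,\Pi_{s+k}}{\Pi_{m+n}}\lesssim \prod_{j=s+k}^{m+n-1}\frac{r^{-\alpha}}{r^{-\alpha}+\beta_j}.
\end{equation*}
Each factor is strictly less than $1$, and for indices $j$ with $\beta_j\ge\varepsilon$ it is bounded by a fixed $\theta<1$. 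This turns the $s$-sum into something controlled by a geometric series plus the contribution of the few $s$ close to $m$.

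To make this work we define $\beta_p$ explicitly to decay slowly. A natural choice is $\beta_p:=K(p+1)^{-\delta}$ with a small $\delta>0$, or, more simply, we \emph{define} $\beta_{m+n}$ as the supremum over the right-hand side divided by $\Pi_{m+n}$. With that definition the inequality \eqref{e:coeff_est} holds by construction, and it only remains to show $\beta_p\to0$. For this we bound the ratio sum by
\begin{equation*}
\sum_{s}\ \prod_{j=s}^{m}\bigl(1+r^\alpha\beta_j\bigr)^{-1},
\end{equation*}
which is at most of order $\max_j(r^\alpha\beta_j)^{-1}$. Multiplied by $C(m+1)^{-\alpha}$, this gives a recursive inequality of the form
\begin{equation*}
\beta_{m+n}\lesssim \frac{(m+1)^{-\alpha}}{\min_{j\le m+n}\beta_j}.
\end{equation*}
This inequality is consistent with $\beta_p\asymp p^{-\alpha/2}$, and an inductive check with that ansatz should close the argument.

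Finally, the last assertion of the lemma follows from \eqref{e:coeff_est}. We have
\begin{equation*}
\sqrt[p]{|a_p|}\le \beta_p^{1/p}\Bigl(\prod_{j=1}^{p-1}(r^{-\alpha}+\beta_j)\Bigr)^{1/p},
\end{equation*}
and by Cesàro means the right-hand side tends to $r^{-\alpha}$, since $\beta_j\to0$ and $\beta_p^{1/p}\le B^{1/p}\to 1$.
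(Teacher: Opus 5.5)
Your argument is correct in substance, but it differs from the paper at the one point that matters, the sum over $s$. The paper keeps the factor $\beta_{k+s}$ and uses the exact telescoping identity
\[
\sum_{s=0}^m \frac{\beta_{k+s}}{r^{\alpha(m-s)}}\prod_{j=k}^{k+s-1}\Bigl(\frac1{r^\alpha}+\beta_j\Bigr)=\prod_{j=k}^{k+m}\Bigl(\frac1{r^\alpha}+\beta_j\Bigr)-\frac1{r^{\alpha(m+1)}},
\]
so summing over $s$ costs nothing. It then defines $\beta_{m+n}$ recursively as a constant times $(m\alpha+1)^{-\alpha}$, divided by a product of at most $n-1$ factors each $\ge r^{-\alpha}$. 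This gives $\beta_p=O(p^{-\alpha})$ at once. In your notation, with $x_j=r^\alpha\beta_j$, one has $\frac{x_i}{1+x_i}\prod_{j>i}(1+x_j)^{-1}=\prod_{j>i}(1+x_j)^{-1}-\prod_{j\ge i}(1+x_j)^{-1}$. So your ``main obstacle'' disappears if you do not replace $\beta_{s+k}$ by $B$. You instead bound $\beta_{s+k}\le\sup_j\beta_j$ and use $\sum_{i\ge1}(1+x)^{-i}=1/x$, which loses the factor $\sup_j\beta_j/\min_{j<m+n}\beta_j$. That loss is affordable only because you then let $\beta_p$ decay slowly, spending half of the gain $(m+1)^{-\alpha}$. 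Your route gives a worse rate, but the lemma only needs $\beta_p\to0$.

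To turn this into a proof, commit to the explicit choice $\beta_p=K(p+1)^{-\delta}$ with $0<\delta\le\alpha/2$; ``small'' is not enough, since your check fails for $\delta>\alpha/2$. Drop the recursive-definition variant: with your lossy bound its implicit constant contains $\sup_j\beta_j$, which is not controlled a priori.

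The check then runs as follows. The induction hypothesis gives $|a_{m+n}|\le C_1(m+1)^{-\alpha}(m+n)^{\delta}\,\Pi_{m+n}$, because $\sup_j\beta_j/\min_{j<m+n}\beta_j=(m+n)^{\delta}$. Here $C_1$ depends on $M,n,r,\alpha$ but not on $K$, since $K$ cancels in that ratio. The bound is $\le K(m+n+1)^{-\delta}\Pi_{m+n}$ as soon as $K\ge C_1(n+1)^{\alpha}$. The indices $p\le n$ are handled by taking $K$ large, since $\Pi_p\ge r^{-\alpha(p-1)}$ is a lower bound independent of $K$. Treating $p=n$ as initial data is genuinely needed: for $n=1$, $m=0$ one has $\Pi_0=\Pi_1$ and the ratio gives no gain. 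Your Ces\`aro argument for the last assertion is fine, read as a $\limsup$.
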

\begin{proof}[Proof of Lemma \ref{l:coef_est}]
	It follows from  properties of the Gamma function that 
	\begin{equation}\label{e:gamma_m}  \frac{\Gamma(m\alpha+1)}{\Gamma((m+n)\alpha+1)}\le \frac{M_1}{(m\alpha+1)^{n\alpha}}=:\gamma_m, \quad m\in\ZZ_+, n\in \NN
	\end{equation}
	and 
	\begin{equation}\label{e:delta}  \frac{\Gamma((s+k)\alpha+1)}{\Gamma(s\alpha+1)}\le {M_2}{((s+k)\alpha+1)^{k\alpha}}=:\delta_{s,k}, \quad s,k\in\ZZ_+, n\in \NN.
	\end{equation}
	The values $\beta_0 $, \dots, $\beta_{n-1}$ are chosen recursively  so that the equality in \eqref{e:coeff_est} holds, i.e. 
	$$\beta_0=|a_0|,\; \beta_1=|a_1|, \; |a_2|=\beta_2(r^{-\alpha}+\beta_1),  \dots, |a_{n-1}|=\beta_{n-1}\prod_{j=1}^{n-2} \left( \frac 1{r^\alpha} +\beta_j\right).$$ 
	We then write $$ \beta_{m+n}=\frac{nMM_1M_2 C}{(m\alpha+1)^\alpha}/{\min_{0\le k\le n-1} \prod_{j=m+k+1}^{m+n-1} \left( \frac 1{r^\alpha} +\beta_j\right)} , \quad m\ge 0,$$
	where the constant $C$ will be specified later. It is an elementary exercise  to prove that $\beta_p\to 0$ as $p\to \infty$.
	
	We prove \eqref{e:coeff_est} by induction. Since the induction base holds by the choice of $\beta_0, \dots, \beta_{n-1}$, it is  sufficient  to prove the induction step.
	
	Let $m\ge 0$, and \eqref{e:coeff_est} hold for $0\le p\le m+n-1$. Then by \eqref{e:reccur} 
	\begin{gather} \nonumber
		|a_{m+n}|\le \gamma_m \sum_{k=0}^{n-1} \sum_{s=0}^m  \beta _{k+s}\prod_{j=1}^{k+s-1} \left( \frac 1{r^\alpha} +\beta_j\right) \frac M{r^{\alpha(m-s)}} \delta_{s,k}\\
		\le \gamma_m M \sum_{k=0}^{n-1} \delta_{m,k} \sum_{s=0}^m  \frac {\beta _{k+s}}{r^{\alpha(m-s)}}  \prod_{j=1}^{k+s-1} \left( \frac 1{r^\alpha} +\beta_j\right) .	\label{e:a_m_n_est}
	\end{gather}
	
	Consider the internal sum. We have 
	\begin{gather*}
		\sum_{s=0}^m  \frac {\beta _{k+s}}{r^{\alpha(m-s)}}  \prod_{j=1}^{k+s-1} \left( \frac 1{r^\alpha} +\beta_j\right) 
		= \prod_{j=1}^{k-1} \left( \frac 1{r^\alpha} +\beta_j\right) \\ \times \left(\frac{\beta_k}{r^{\alpha m}}+ \frac{\beta_{k+1}}{r^{\alpha (m-1)}}\left(\frac 1{r^\alpha}+\beta_k\right)+\dots + \beta_{k+m} \prod_{j=k}^{k+m-1} \left( \frac 1{r^\alpha} +\beta_j\right) \right)\\
		< \prod_{j=1}^{k-1} \left( \frac 1{r^\alpha} +\beta_j\right) \left(\frac{1}{r^{\alpha (m+1)}}+ \frac{\beta_k}{r^{\alpha m}}+ \frac{\beta_{k+1}}{r^{\alpha (m-1)}}\left(\frac 1{r^\alpha}+\beta_k\right)+\dots\right. \\ \left. + \beta_{k+m} \prod_{j=k}^{k+m-1} \left( \frac 1{r^\alpha} +\beta_j\right) \right)\\
		= \prod_{j=1}^{k-1} \left( \frac 1{r^\alpha} +\beta_j\right) \prod_{j=k}^{k+m} \left( \frac 1{r^\alpha} +\beta_j\right) =\prod_{j=1}^{k+m} \left( \frac 1{r^\alpha} +\beta_j\right) .
	\end{gather*}
	Substituting this estimate into \eqref{e:a_m_n_est} we get
	\begin{gather*}
		|a_{m+n}|\le \gamma_m M \sum_{k=0}^{n-1} \delta_{m,k} \prod_{j=1}^{k+m} \left( \frac 1{r^\alpha} +\beta_j\right) \\ \le 
	M	M_1M_2  \sum_{k=0}^{n-1} \frac{((m+k)\alpha+1)^{k\alpha}}{(m\alpha+1)^{n\alpha}} \prod_{j=1}^{k+m} \left( \frac 1{r^\alpha} +\beta_j\right) \\
		\le MM_1M_2  \max_{0\le k\le n-1} \prod_{j=1}^{k+m} \left( \frac 1{r^\alpha} +\beta_j\right) \sum_{k=0}^{n-1} \frac{C}{(m\alpha+1)^{(n-k)\alpha}}\\
		\le \frac{nMM_1M_2C}{(m\alpha+1)^\alpha} \prod_{j=1}^{m} \left( \frac 1{r^\alpha} +\beta_j\right) \max_{0\le k\le n-1} \prod_{j=m+1}^{k+m} \left( \frac 1{r^\alpha} +\beta_j\right)\\
		= \frac{nMM_1M_2C}{(m\alpha+1)^\alpha}\frac{\prod_{j=1}^{m+n-1} \left( \frac 1{r^\alpha} +\beta_j\right)}{\min_{0\le k\le n-1} \prod_{j=m+k+1}^{m+n-1} \left( \frac 1{r^\alpha} +\beta_j\right)}=\beta_{m+n} \prod_{j=1}^{m+n-1} \left( \frac 1{r^\alpha} +\beta_j\right),
	\end{gather*}
	where $C=\sup_{m\in \ZZ_+} \left(\frac{(m+n-1)\alpha+1}{m\alpha+1}\right)^{n-1}$.
	Since the product in the denominator is uniformly in $m$ bounded from above and separated from zero, the induction step is proved. 
\end{proof}


\begin{theorem}\label{t:sol_finite_order} 
	If all coefficients $P_j$ of \eqref{e:frac_lin_eq} are polynomilals, then all $\alpha$-analytic soultions have the form $v(t^\alpha)$ where 	$v$ is an entire function of finite order of the growth.
\end{theorem}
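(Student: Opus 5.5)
First I reduce the statement to Theorem~\ref{t:Cauchy} and a coefficient estimate. Every $\alpha$-analytic solution is determined by its initial data $b_0,\dots,b_{n-1}$. Since polynomial coefficients are $\alpha$-analytic with $\rho_k=\infty$, Theorem~\ref{t:Cauchy} gives $\rho=\infty$. So any solution is $y(x)=\sum a_m x^{m\alpha}=v(x^\alpha)$ with $v(z)=\sum a_m z^m$ entire. (Alternatively, Lemma~\ref{l:coef_est} with arbitrary $r$ gives $\limsup|a_p|^{1/p}=0$.) It remains to show that $v$ has finite order. By the standard coefficient formula for the order, it suffices to prove
\[
|a_p|\le \frac{C^p}{(p!)^{\delta}} \quad\text{for some } \delta>0,
\]
since $\log|a_p|^{-1}\gtrsim p\log p$ forces $\sigma(v)\le 1/\delta<\infty$.

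To get this bound I use the recurrence \eqref{e:reccur}. Let $d=\max_k\deg P_k$. Then $p_{m-s,k}=0$ unless $m-s\le d$, so the inner sum runs only over $s\in[m-d,m]$, and $|p_{j,k}|\le M$. By \eqref{e:gamma_m} and \eqref{e:delta}, each term with index $s+k=q\le m+n-1$ contributes at most
\[
\frac{\Gamma(m\alpha+1)}{\Gamma((m+n)\alpha+1)}\cdot\frac{\Gamma(q\alpha+1)}{\Gamma((q-k)\alpha+1)}\lesssim \frac{m^{k\alpha}}{m^{n\alpha}}\le m^{-\alpha}.
\]
Hence
\[
|a_{m+n}|\le \frac{K}{(m+1)^{\alpha}}\max_{m-d\le q\le m+n-1}|a_q|,
\]
where $K$ depends only on $n$, $d$, $M$ and the Gamma bounds.

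Next define $A_p=\max_{q\le p}|a_q|$ and pass to blocks of length $L=n+d$. The inequality says that each new coefficient is at most $K(m+1)^{-\alpha}$ times the maximum over a window reaching back at most $L$ steps. Inducting on blocks, I expect $|a_p|\le C_0K^{p}\prod_{i}(\cdot)^{-\alpha}$. Concretely, after every $L$ steps one more factor of size about $p^{-\alpha}$ accumulates, which gives
\[
|a_p|\le C^{p}\,(p!)^{-\alpha/L}.
\]
The cleanest way to prove this is to verify directly by induction that $B_p:=C^p\Gamma(p/L+1)^{-\alpha}$ satisfies $B_{m+n}\ge K(m+1)^{-\alpha}B_q$ for all $q\in[m-d,m+n-1]$. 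This holds once $C$ is large, because $\Gamma((m+n)/L+1)/\Gamma(q/L+1)\le \text{const}\cdot(m+1)^{(m+n-q)/L}$ with exponent at most $1$, while $C^{m+n-q}\ge C$ absorbs $K$ and the constants. A small care point is that $B_p$ must be nonincreasing-compatible; I will use $\max$ over the window and monotonicity of $C^p$ for $p$ large.

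From $\log(1/|a_p|)\ge (\alpha/L)\,p\log p-O(p)$, the order formula $\sigma=\limsup \frac{p\log p}{-\log|a_p|}$ yields $\sigma(v)\le (n+d)/\alpha$, which is finite. The main obstacle is the window inequality in the induction step, that is, checking that the Gamma-ratio growth of $B_p$ across a gap of up to $L$ indices is dominated by the single factor $(m+1)^{\alpha}$ times $C^{m+n-q}$. If this is too crude near the boundary $q=m-d$, I would instead choose $L=n+d$ generously (any larger $L$ still gives finite order).
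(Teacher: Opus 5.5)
Your argument is correct, but it takes a different route from the paper. The paper proves this theorem with its Wiman--Valiron theorem for $\mathcal{D}_\alpha$ (Theorem~\ref{th:frwv}). After getting $v$ entire from Theorem~\ref{t:Cauchy}, it evaluates the transformed equation at points with $|v(z)|=M(|z|,v)$. There it replaces each $\mathcal{D}_\alpha^j v(z)$ by $(\nu\alpha)^{j\alpha}v(z)z^{-j}(1+o(1))$, which yields the algebraic relation \eqref{e:puiso_mem} for the unknown $(\nu\alpha)^\alpha$. It bounds the root of that relation outside an exceptional set of finite logarithmic measure, removes the set with Laine's Lemma~1.1.2, and arrives at $\sigma(v)\le\frac1\alpha\max_k(d_k+n-k)$.

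You bypass all of this and work only with the recurrence \eqref{e:reccur}. Polynomial coefficients make the window $s\ge m-d$ finite, and the Gamma ratios give a gain of at least $(m+1)^{-\alpha}$ per step. The majorant $C_0C^p\Gamma(p/L+1)^{-\alpha}$ then gives $\sigma(v)\le (n+d)/\alpha$. This is slightly cruder than the paper's bound but is all the statement needs.

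What your route buys:
\begin{itemize}
\item It is elementary and involves no exceptional sets.
\item It makes Theorem~\ref{t:Cauchy} superfluous, since the same coefficient bound already shows the series for $v$ has infinite radius.
\item It can be sharpened. For the $k$-th term the index gap is at most $n-k+d_k$ and the gain is $(m+1)^{-(n-k)\alpha}$. Keeping these separately, the majorant $C^p(p!)^{-\lambda}$ with $\lambda=\min_k\frac{(n-k)\alpha}{n-k+d_k}$ gives exactly the upper bound $\sigma_0=\frac1\alpha\max_k\bigl(1+\frac{d_k}{n-k}\bigr)$ of Theorem~\ref{t:sharp_order}.
\end{itemize}
What the Wiman--Valiron route buys is the asymptotic relation \eqref{e:puiso_mem} linking $\nu(r)$ to $r$. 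The paper reuses it for the lower bound on the order in Theorem~\ref{t:sharp_order}, whereas the recurrence by itself only gives upper bounds on $|a_p|$.

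Two small points on your write-up:
\begin{itemize}
\item Your $B_p$ needs the multiplicative constant $C_0$ to cover the base cases $p<n$, since $B_0=1$ may be smaller than $|a_0|$. You mention $C_0$ earlier but drop it later.
\item The concern about monotonicity of $B_p$ is unnecessary. The induction step compares $B_{m+n}$ with each $B_q$, $\max(0,m-d)\le q\le m+n-1$, separately.
\end{itemize}
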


\begin{proof}[Proof of Theorem \ref{t:sol_finite_order}]
	By Theorem \ref{t:Cauchy}, there is an entire function $v$ such that $y(t)=v(t^\alpha)$ is the unique solution to the Cauchy problem \eqref{e:frac_lin_eq}, \eqref{e:Cp}. By Theorem \ref{th:mrwv}, there exists a set $E\subset [1, \infty)$ of finite logarithmic measure  such that
	\begin{equation}\label{e:wv_rel} 
		\mathcal{D}_\alpha^jv(z) =(\nu(r,v)\alpha)^{j\alpha} \frac{v(z)}{z^j}(1+o(1)), \quad |z|\not \in E,
	\end{equation}
	where $z$ satisfies $M(|z|,v)=|v(z)|$.  
	
	Let $c_jz^{d_j}$ be the leading coefficient of $P_j(z)$, $j\in \{0, \dots, n-1\}$. Substituting \eqref{e:wv_rel} into \eqref{e:frac_lin_eq} and dividing by $v(z)$ we obtain $(\nu=\nu(|z|,v))$
	\begin{gather*}
		(1+o(1))\frac{(\nu \alpha)^{n\alpha}}{z^n}+ (c_{n-1}+o(1))z^{d_{n-1}}\frac{(\nu \alpha)^{(n-1)\alpha}}{z^{n-1}}+\dots\\  + (c_{1}+o(1))z^{d_{1}}\frac{(\nu \alpha)^{\alpha}}{z}+(c_{0}+o(1))z^{d_{0}}=0
	\end{gather*}	
	or 
	\begin{gather}\nonumber
		{(\nu \alpha)^{n\alpha}}+ (c_{n-1}+o(1))z^{d_{n-1}+1}{(\nu \alpha)^{(n-1)\alpha}}+\dots\\ + (c_{1}+o(1))z^{d_{1}+n-1}{(\nu \alpha)^{\alpha}}+(c_{0}+o(1))z^{d_{0}+n}=0.\label{e:puiso_mem}
	\end{gather}
	
	Considering the term $(\nu \alpha)^\alpha$ as an unknown variable, by \cite[Lemma 1.3.1]{Laine} we deduce that
	$$ (\nu \alpha)^\alpha\le 1+ \max_{0\le k\le n-1} |c_k+o(1)|r^{d_k+n-k}, \quad r\not\in E. $$
	To finish the proof of Theorem \ref{t:sol_finite_order} we need one more lemma.
	\begin{lemma}[{\cite[Lemma 1.1.2]{Laine}}]\label{l: expset} Let $g \colon(0,+\infty)\rightarrow\mathbb{R}$, $h\colon (0,+\infty)\rightarrow\mathbb{R}$ be monotone increasing functions such that $g(r)\leq h(r)$ outside  an exceptional set $E$ of finite logarithmic measure. Then, for any $\gamma >1$, there exists $r_{0}>0$ such that $g(r)\leq h(r^{\gamma})$ holds for all $r>r_{0}$.
	\end{lemma}
	Applying this lemma we deduce that $\nu(r,v)=O(r^\sigma)$ as $r\to \infty$, where $\sigma > \frac 1\alpha \max_{0\le k\le n-1} (d_k+n-k)$. As a consequence, the order  $\sigma(v)$ does not exceed this number. The theorem is proved.
\end{proof}

\begin{theorem}\label{t:sharp_order}
	Let $P_j$ be polynomials of degree $d_j=\deg P_j$, $j\in \{0, \dots, n-1\}$, $p_0\not\equiv 0$, and $\max_{0\le k\le n-1} \frac{d_k}{n-k}=\frac{d_0}{n}$. Then all non-trivial $\alpha$-analytic solutions $y$ of the equation \eqref{e:frac_lin_eq} has the form $y(t)=f(t^\alpha)$, $t\ge 0$, where
	the order of an entire function $f$ is $\rho(f)=\frac 1\alpha \left(1+\frac{d_0}{n} \right)$. 
\end{theorem}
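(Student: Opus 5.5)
By Theorem \ref{t:Cauchy} and Theorem \ref{t:sol_finite_order}, every non-trivial $\alpha$-analytic solution has the form $y(t)=f(t^\alpha)$ with $f$ entire of finite order. The equation \eqref{e:frac_lin_eq} then becomes
$$\mathcal{D}_\alpha^n f+P_{n-1}\mathcal{D}_\alpha^{n-1}f+\dots+P_0 f=0$$
in the variable $z$. The plan is to insert the Wiman-Valiron asymptotics of Theorem \ref{th:frwv} into this equation and read off the growth of $\nu(r,f)$ from the resulting algebraic relation.

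First we check that $f$ is transcendental. If $f$ were a polynomial of degree $N$, then $P_0f$ would have degree $d_0+N$. Each term $P_k\mathcal{D}_\alpha^k f$ with $k\ge1$ has degree at most $d_k+N-k<d_0+N$, because $d_k\le d_0(n-k)/n\le d_0$. Hence the coefficient of $z^{d_0+N}$ cannot cancel, which forces $f\equiv0$. So $\nu(r,f)\to\infty$.

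Take $|z|=r\notin E$ with $|f(z)|=M(r,f)$. Exactly as in the proof of Theorem \ref{t:sol_finite_order}, Theorem \ref{th:frwv} gives relation \eqref{e:puiso_mem}. Set $X=(\nu\alpha)^\alpha$. The relation reads
$$X^n+\sum_{k=1}^{n-1}(c_k+o(1))z^{d_k+n-k}X^k+(c_0+o(1))z^{d_0+n}=0,$$
with $c_0\ne0$. Normalise by putting $X=r^{1+d_0/n}Y$. The term of index $k$ then carries the factor
$$r^{d_k+n-k-(n-k)(1+d_0/n)}=r^{d_k-(n-k)d_0/n}\le 1,$$
by the hypothesis $\max_k d_k/(n-k)=d_0/n$. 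So $Y$ satisfies
$$Y^n+\sum_{k=1}^{n-1}b_k(r)Y^k+b_0(r)=0,$$
where the $b_k$ are bounded and $|b_0(r)|\to|c_0|\neq0$.

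\textbf{Upper bound.} Cauchy's bound for roots (Lemma 1.3.1 of \cite{Laine}) gives $|Y|\le C$. Hence $\nu\lesssim r^{(1+d_0/n)/\alpha}$ outside $E$. Lemma \ref{l: expset} removes $E$ at the cost of an arbitrarily small increase of the exponent, and \eqref{e:order} then yields $\rho(f)\le\frac1\alpha(1+\frac{d_0}n)$.

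\textbf{Lower bound.} This is the step needing care. Since the product of the roots equals $\pm b_0(r)$ and all roots are bounded by $C$, no root can be small in a way compatible with $Y\to0$. Indeed, if $Y\to0$ along a sequence, the left-hand side would tend to $b_0\approx c_0\ne0$, a contradiction. So the actual value $Y=X r^{-1-d_0/n}$ satisfies $|Y|\ge c>0$ for large $r\notin E$. This gives $\nu(r,f)\gtrsim r^{(1+d_0/n)/\alpha}$ for $r\notin E$. Because $E$ has finite logarithmic measure, there are arbitrarily large such $r$, so the $\varlimsup$ in \eqref{e:order} is at least $\frac1\alpha(1+\frac{d_0}{n})$.

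The main obstacle is the bookkeeping of the $o(1)$ terms. They are genuinely $o(1)$ because the error in \eqref{e: th1} is $O(\varkappa(\nu)/\nu)$ relative to $M(r,f)/r^j$, and $|f(z)|=M(r,f)$ at the chosen point. They also multiply coefficients $z^{d_k+n-k}$ that are only dominated after the normalisation. One should divide the equation by $r^{d_0+n}$ before passing to the limit, so that every error term is $o(1)$ uniformly once $|Y|$ is known to be bounded; the upper bound must therefore be established first.
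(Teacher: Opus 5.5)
Your proof is correct. Like the paper, you evaluate Theorem \ref{th:frwv} at maximum-modulus points to obtain \eqref{e:puiso_mem} and then compare the sizes of its terms; what differs is how that comparison is organized.

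The paper proves each inequality by contradiction. For $\sigma(f)\le\sigma_0$ it builds a set $F=\bigcup_m[r_m,2r_m]$ of infinite logarithmic measure on which $\nu(r)\gtrsim r^{\sigma_0+\eta}$, so no exceptional-set lemma is needed, and claims that $(\nu\alpha)^{n\alpha}$ dominates there. For $\sigma(f)\ge\sigma_0$ it assumes $\nu=O(r^\sigma)$ with $\sigma<\sigma_0$, and uses the exponent inequalities of \cite[Lemma 4.2]{GSW} to make the $c_0$-term dominate.

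Your substitution $X=r^{1+d_0/n}Y$ encodes the hypothesis $\max_k d_k/(n-k)=d_0/n$ once, as bounded coefficients $b_k$ with $|b_0|\to|c_0|\neq0$. Both bounds then reduce to elementary root localization: Cauchy's bound from above, and the non-vanishing constant term from below. You remove $E$ as in Theorem \ref{t:sol_finite_order}.

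Your route buys two things.
\begin{itemize}
\item It gives the two-sided estimate $\nu(r,f)\asymp r^{(1+d_0/n)/\alpha}$ for $r\notin E$. Combined with the monotonicity of $\nu$ and the finite logarithmic measure of $E$, this even shows that the lower order of $f$ equals its order.
\item It avoids a weak spot in the paper's upper-bound step. There $(\nu\alpha)^{\alpha j}$ is bounded by $r^{\alpha j(\sigma_0+\varepsilon)}$ on $F$, an upper bound on $\nu$ that is not available at that point. The sound way to run that argument is to divide \eqref{e:puiso_mem} by $(\nu\alpha)^{n\alpha}$, which is exactly what Cauchy's bound does in your version.
\end{itemize}

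Your explicit check that $f$ is transcendental is a genuine addition. The paper uses $\nu\to\infty$ tacitly, and it is needed for the $o(1)$ terms.

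The concern in your last paragraph is unnecessary. The errors from Theorem \ref{th:frwv} multiply the coefficients, so the $b_k$ are bounded independently of $Y$, and the order of the two steps does not matter.
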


\begin{cor*} Let $\alpha>0$, $n\in \mathbb{N}$,  $P$ be a nontrivial polynomial of degree $d_0$. All non-trivial $\alpha$-analytic solutions $y$ of the equation 
	$$ \mathbb{D}_\alpha^n y+ P(x^\alpha)y=0$$	 has the form $y(t)=f(t^\alpha)$, $t\ge 0$, where
	the order of an entire function $f$ is $\rho(f)=\frac 1\alpha \left(1+\frac{d_0}{n} \right)$.
	\end{cor*}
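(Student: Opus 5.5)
The statement immediately above is the Corollary. It is the special case of Theorem \ref{t:sharp_order} in which $P_1=\dots=P_{n-1}\equiv 0$ and $P_0=P$. So my plan is to prove Theorem \ref{t:sharp_order} and then simply check that its hypotheses hold.

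\emph{Reduction.} For vanishing coefficients put $d_k=-\infty$, or just drop those terms. Then the condition $\max_{0\le k\le n-1} \frac{d_k}{n-k}=\frac{d_0}{n}$ holds trivially, and $p_0=P(x^\alpha)\not\equiv 0$. By Theorems \ref{t:Cauchy} and \ref{t:sol_finite_order}, every $\alpha$-analytic solution is $y(t)=f(t^\alpha)$ with $f$ entire of finite order. The equation then becomes $\mathcal{D}_\alpha^n f(z)+P(z)f(z)=0$, by \eqref{e:GL_d_alpha} and its iterates via Remark \ref{r:2}.

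\emph{Transcendence.} A nontrivial solution cannot make $f$ a polynomial. If $f$ had degree $N$, then $\mathcal{D}_\alpha^n f$ would have degree $N-n$ (or vanish), while $Pf$ would have degree $N+d_0\ge N$. Hence $\nu(r,f)\to\infty$, and Theorem \ref{th:frwv} applies.

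\emph{Wiman--Valiron step.} Choose $z$ with $|z|=r\notin E$ and $|f(z)|=M(r,f)$. By \eqref{e: th1} with $j=n$,
\[(\nu\alpha)^{n\alpha}\frac{f(z)}{z^n}(1+o(1))=-P(z)f(z).\]
Dividing by $f(z)$ gives
\[(\nu\alpha)^{n\alpha}=(|c_0|+o(1))\,r^{d_0+n},\]
so $\nu(r,f)\sim C r^{(d_0+n)/(n\alpha)}$ for $r\notin E$.

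In the general setting of Theorem \ref{t:sharp_order} I would instead use \eqref{e:puiso_mem}. Put $X=(\nu\alpha)^\alpha$ and view it as a polynomial equation in $X$. The hypothesis $\frac{d_k}{n-k}\le \frac{d_0}{n}$ says that in the Newton polygon the dominant root satisfies $X\asymp r^{(d_0+n)/n}$: the term $c_0z^{d_0+n}$ balances $X^n$, while the other terms are at most comparable. Some root has size $\asymp r^{1+d_0/n}$, but I must exclude $X$ being much smaller. Suppose $X=o(r^{1+d_0/n})$. Each middle term is $|c_k|r^{d_k+n-k}X^k$, and $d_k+n-k\le (n-k)(1+d_0/n)$, so it equals $o(r^{d_0+n})$. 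This contradicts the constant term $c_0z^{d_0+n}$. Hence $X\gtrsim r^{1+d_0/n}$ off $E$, and Theorem \ref{t:sol_finite_order} gives the matching upper bound $X\lesssim r^{1+d_0/n}$.

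\emph{Conclusion.} Both $\nu(r,f)$ and $r^{\frac{1}{\alpha}(1+\frac{d_0}{n})}$ are monotone, and $\nu(r,f)\asymp r^{\frac{1}{\alpha}(1+\frac{d_0}{n})}$ holds off a set of finite logarithmic measure. Lemma \ref{l: expset}, applied in both directions with $\gamma\to1$, removes the exceptional set. Then \eqref{e:order} yields $\sigma(f)=\frac1\alpha\bigl(1+\frac{d_0}{n}\bigr)$.

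The main obstacle is the lower bound on $\nu$ in the general case. In the equality case $\frac{d_k}{n-k}=\frac{d_0}{n}$, the middle terms have the same size as the extremes, and cancellation among them could in principle make the root $X$ small. The contradiction argument above needs care there, since $o(1)$ errors multiply terms of equal size. For the corollary this issue disappears, because only the two extreme terms are present.
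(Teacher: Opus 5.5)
Your proof of the Corollary is correct and follows the paper's route. The Corollary is Theorem~\ref{t:sharp_order} with $P_1=\dots=P_{n-1}\equiv 0$: you substitute the asymptotics of Theorem~\ref{th:frwv} at a maximum-modulus point, balance $(\nu\alpha)^{n\alpha}$ against $|c_0|r^{d_0+n}$ off $E$, and then remove $E$; with only two terms you even get $\nu\sim Cr^{(d_0+n)/(n\alpha)}$ directly.

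Your general-case sketch, which the Corollary does not use, has two slips. First, Theorem~\ref{t:sol_finite_order} only yields $X\lesssim r^{\max_k(d_k+n-k)}$ for $X=(\nu\alpha)^\alpha$, not $X\lesssim r^{1+d_0/n}$. For the sharp upper bound you need a dominance argument: if $X\ge Kr^{1+d_0/n}$, then $r^{d_k+n-k}X^k\le K^{k-n}X^n$, since $d_k+n-k\le (n-k)(1+d_0/n)$. Second, the equality-case obstacle you raise is not real: under $X=o(r^{1+d_0/n})$, every term other than $c_0z^{d_0+n}$ is $o(r^{d_0+n})$ regardless of cancellation.
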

\begin{proof}[Proof of Theorem \ref{t:sharp_order}] 
	First, we show that $$\sigma(f)\le \sigma_0:=\frac 1\alpha \max_{0\le k\le n-1}\left( \frac{d_k}{n-k}+1\right).$$
	Suppose the  contrary. Then, by \eqref{e:order} there exists $\eta>0$ and a sequence of positive numbers $(r_n)$ tending to $+\infty$ such that $1<r_m<r_{m+1}/2$ with $\nu(r_m)\ge r_m^{\sigma_0+\eta}$. Let $F=\bigcup_{m=1}^\infty [r_m, 2r_{m}]$. Clearly,   $F$ has infinite logarithmic measure. Moreover, for $r\in F$, we have that $r\in [r_m, 2r_{m}]$ for some $m=m(r)$. Since $\nu(r)$ is non-decreasing, 
	\begin{equation}\label{e:nu_log_mes}
		\nu(r)\ge \nu(r_m)\ge r_{m}^{\sigma_0+\eta}\ge \frac{r^{\sigma_0+\eta}}{2^{\sigma_0+\eta}}, \quad r\in F.
	\end{equation}
	Therefore, for $r\in F\setminus E$, which is of infinite logarithmic measure, and, in particular, unbounded, we have  that  \eqref{e:puiso_mem} holds. Note that for every $j\in \{1, \dots, n-1\}$ and $\varepsilon>0$ the following estimates are valid
	\begin{gather*}
		(c_j+o(1))|z|^{d_j+n-j} (\nu \alpha)^{\alpha j}\le (c_j \alpha^{j\alpha}+o(1))r^{d_j+n-j+\alpha j(\sigma_0+\varepsilon)}\\ \le  (c_j \alpha^{j\alpha}+o(1))r^{\frac{\alpha(n-j)}{\alpha}\left(\frac{d_j}{n-j}+1\right)+\alpha j(\sigma_0+\varepsilon)}
		\le (c_j \alpha^{j\alpha}+o(1)) r^{\alpha \sigma_0 \left(n+\frac{\varepsilon}{\sigma_0 j}\right)}.
	\end{gather*}
	That is, \eqref{e:puiso_mem} becomes 
	$$ (\nu \alpha)^{n\alpha} +O(r^{\alpha \sigma_0 \left(n+\frac{\varepsilon}{\sigma_0 j}\right)})=0, \quad r\in F\setminus E,$$
	which contradicts \eqref{e:nu_log_mes} provided that $\varepsilon \in (0, \eta n)$.  Thus, $\sigma(f)\le \sigma_0$.
	
	We now prove the converse inequality. It follows directly from \cite[Lemma 4.2]{GSW} that
	$$ n-k+d_k+k s < d_0+n s, \quad k\in\{1, \dots, n\},$$
	where $\frac{d_0}{n}=\max_{0\le k\le n-1}\frac{d_k}{n-k}$ for any real $s< \sigma_0 \alpha$. That is $\nu(r)=O(r^\sigma)$, $\sigma<\sigma_0$ is also impossible, because in this case  \eqref{e:puiso_mem} can be rewritten in the form 
	$ (c_0+o(1))z^{d_0}=0$. The theorem is proved.		
\end{proof}

{\it Address 1:} Faculty of Mechanics and Mathematics, Lviv Ivan Franko National University,
Universytets'ka 1, 79000,
 Lviv,  Ukraine\\
 
 Faculty of Mathematics and Computer Sciences, University of Warmia and Mazury in Olsztyn, S\l oneczna 54, 10-710 Olsztyn, Poland
 \\

{\it e-mail:} chyzhykov@yahoo.com

%

\end{document}